\def\tank#1{\protected@xdef\@thanks{\@thanks
        \protect\footnotetext[0]{#1}}}
\def\bigfoot{

    \@footnotetext}
\newcommand{\ea}{\end{array}}
\newtheorem{theorem}{Theorem}[section]
\newtheorem{proposition}{Proposition}[section]
\newtheorem{corollary}{Corollary}[section]
\newtheorem{lemma}{Lemma}[section]
\newtheorem{definition}{Definition}[section]
\newtheorem{Rem}{Remark}[section]
\newenvironment{proof}{Proof.}
\title
{\bf Random attractor for the 2D stochastic nematic liquid crystals flows with multiplicative noise \thanks{This work was partially
supported by NNSF of China(Grant No. 11401057),   Natural Science Foundation Project of CQ  (Grant No. cstc2016jcyjA0326),
Fundamental Research Funds for the Central Universities(Grant No. 106112015CDJXY100005) and China Scholarship Council (Grant No.:201506055003).} }
\author{
 Guoli Zhou
\thanks{ Chongqing University, P.R. China }
\tank{E-mail:zhouguoli736@126.com.}
}
\begin{document}
\maketitle

\begin{abstract}
Under $non-periodic$ $boundary$ $conditions$, we consider the long-time behavior for stochastic 2D nematic liquid crystals flows with velocity and orientations perturbed by additive noise and multiplicative noise respectively.
The presence of the noises destroys the basic balance law of the  nematic liquid crystals flows, so we can not follow the standard argument to obtain uniform a priori estimates for the stochastic flow  under Dirichlet boundary condition and Numann boundary condition for velocity field and orientation field respectively. To overcome the difficulty our idea is to use some kind of logarithmic energy estimates and It\^{o} formula in some Banach space to obtain the uniform estimates which improve the previous result for the orientation field that grows exponentially w.r.t.time t. In order to study the existence of random attractor, we need to show the solution is a stochastic flow.  But this is not obvious because of the emergence of this kind of multiplicative noise in the orientation field. We give a short proof which is highly non-trivial to show the flow property of the orientation field.  Our idea is to construct several linear stochastic partial differential equations whose scalar valued solutions are stochastic flow, then by discussing the relationship between these scalar  equations and orientation field equation we prove that the each component of orientation field is indeed a stochastic flow  .  Since the global well-posedness is only established for the weak solution, to consider the existence of random attractor, the common method is to derive  uniform a priori estimates in functional space which is more regular than the weak solution space. However, the common method fails because of the ill-posedness of the strong solution. Here, our idea is that by proving the compactness property of the stochastic flow and regularity of the solutions we construct a compact absorbing ball in the weak solution space which implies the existence of the random attractor.  It is\ $\mathbf{the}$\ $\mathbf{first}$ $\mathbf{result}$ for the long-time behavior of stochastic nematic liquid crystals under Dirichlet boundary condition for velocity field and Neumann boundary condition for orientation field.
\end{abstract}

\noindent{\it Keywords:} \small Nematic liquid crystals,  Random attractor

\noindent{\it {Mathematics Subject Classification (2000):}} \small
{60H15, 35Q35.}

\section{Introduction}
\par
The paper is concerned with the following stochastic hydrodynamical model for the flow of nematic liquid crystals in $\mathbf{D}\times \mathbb{R}^{+}$, where $\mathbf{D}\subset \mathbb{R}^{2}$ is a bounded domain with smooth boundary $\Gamma$.

\begin{eqnarray}
   \mathbf{v}_{t}+[(\mathbf{v}\cdot \nabla)\mathbf{v}-\mu \Delta \mathbf{v}+\nabla p]+\lambda \nabla \cdot (\nabla \mathbf{d}\odot \nabla \mathbf{d})&=&  \dot {W}_{1}, \\
    \nabla\cdot \mathbf{v}(t)&=&0,\\
    \mathbf{d}_{t}+[(\mathbf{v}\cdot \nabla)\mathbf{d}]- \gamma (\Delta \mathbf{d}- f(\mathbf{d}))&=&(\mathbf{d}\times h)\circ \dot{W}_{2}.
\end{eqnarray}
The unknowns for the 2D stochastic  hydrodynamical model are the fluid velocity field $\mathbf{v}=(\mathbf{v}^{1}, \mathbf{v}^{2})\in \mathbb{R}^{2},$ the averaged macroscopic/continuum molecular orientations $ \mathbf{d}=({d}^{1}, {d}^{2}, d^{3} ) \in \mathbb{R}^{3}$ and the scalar function $p(x,t)$ representing the pressure (including both the hydrostatic and the induced elastic part from the orientation field). The positive constants $\nu, \lambda $ and $\gamma$ stand for viscosity, the competition  between kinetic energy and potential energy, and macroscopic elastic relaxation time (Debroah number) for the molecular orientation field.   ${W}_{1}$ is a standard Wiener process in $\mathbf{H}$ defined below and has the form of
\begin{eqnarray*}
{W}_{1}(t):= \sum_{i=1}^{\infty} \lambda_{i}^{\frac{1}{2}}e_{i}B_{i}(t),
\end{eqnarray*}
where ${(} ( B_{i}(t) )_{t\in \mathbb{R}}  {)}_{i\in \mathbb{N}}$ be
a sequence of one-dimensional, independent, identically distributed Brownian
motions defined on  the complete probability space $(\Omega, \mathcal{F}, \mathbb{P} )$, $(e_{i})_{i\in \mathbb{N}}$ is an orthonormal basis in $\mathbf{H},$ $(\lambda_{i})_{i\in \mathbb{N}}$ is a convergent sequence of positive numbers which assure $W_{1}$ is a standard Wiener process in $\mathbf{H}$. ${W}_{2}$ is a standard real-valued Brownian motion with $(\mathbf{d}\times h)\circ \dot{W}_{2}$ understood in the stratonovich sense. $h=(h_{1}, h_{2}, h_{3})\in \mathbb{R}^{3},$ $h_{i}$ is constant, $i=1, 2, 3 .$ Here $f:\mathbb{R}^{3}\rightarrow \mathbb{R}^{3} $ is a general polynomial function whose details will be given later.
The symbol $\nabla \mathbf{d} \odot \nabla \mathbf{d}$ denote the $2\times 2$ matrix whose $(i,j)$-th entry is given by
\begin{eqnarray*}
[\nabla \mathbf{d}\odot \nabla \mathbf{d}]_{i,j}=\sum^3_{k=1}\partial_{x_i}\mathbf{d}^{(k)}\partial_{x_j}\mathbf{d}^{(k)},\quad i,j=1,2.
\end{eqnarray*}
In this paper, we consider the following initial boundary conditions for the stochastic nematic liquid crystals equations. Boundary conditions
\begin{eqnarray}
\mathbf{v}(t,x)=0,\ \ \ \  \frac{\partial\mathbf{d}(x,t)}{\partial \mathbf{n}}=0,\ \ \ \mathrm{for}\ (x,t)\in \Gamma\times \mathbb{R}^{+}.
\end{eqnarray}
Initial conditions
\begin{eqnarray}
\mathbf{v}|_{t=0}=\mathbf{v}_{0}(x)\ \mathrm{with}\ \nabla\cdot \mathbf{v}_{0}=0,\ \ \ \  \mathbf{d}|_{t=0}=\mathbf{d}_{0}(x),\ \ \ \mathrm{for}\ x\in \mathbf{D},
\end{eqnarray}
where $\mathbf{n}$ is the outward unite normal vector to $\Gamma.$
In $\cite{Lin} $, F.H. Lin proposed a corresponding deterministic model of $(1.1)-(1.3)$ as a simplified system of the original Ericksen-Leslie system $(see\ \cite{E, Le})$. By Ericksen-Leslie's hydrodynamical theory of the liquid crystals, the simplified system describing the orientation as well as the macroscopic motion reads as follows
\begin{eqnarray}
   d\mathbf{v}+[(\mathbf{v}\cdot \nabla)\mathbf{v}-\mu \Delta \mathbf{v}+\nabla p]dt&=&\lambda \nabla \cdot (\nabla \mathbf{d}\odot \nabla \mathbf{d})dt, \\
    \nabla\cdot \mathbf{v}(t)&=&0,\\
    \partial_{t}\mathbf{d}+[(\mathbf{v}\cdot \nabla)\mathbf{d}]&=&\gamma (\Delta \mathbf{d}(t)+ |\nabla \mathbf{d}|^{2}\mathbf{d}),\ \ |\mathbf{d}|=1.
\end{eqnarray}
 In order to avoid the nonlinear gradient in $(1.8)$, usually one uses the Ginzburg-Landau approximation to relax the constraint $\mathbf{d}=1 .$ The corresponding approximate energy is
 \begin{eqnarray}
  \int_{\mathbf{D}} [\frac{1}{2}|\nabla \mathbf{d}|^{2}+\frac{1}{4\eta^{2}}(|\mathbf{d}|^{2}-1)^{2}]dx
\end{eqnarray}
 where $\eta$ is a positive constant. Then one arrives at the approximation system $(1.6)-(1.8)$ with $f(\mathbf{d})$ and $F(\mathbf{d})$ given by
\begin{eqnarray}
f(\mathbf{d})=\frac{1}{\eta^{2}}(|\mathbf{d}|^{2}-1)\mathbf{d}\ \ \mathrm{and}\ \ F(\mathbf{d})= \frac{1}{4\eta^{2}}(|\mathbf{d}|^{2}-1)^{2}.
\end{eqnarray}
In this work, we consider a more general polynomial function $f(\mathbf{d})$ which contains as a special case the $(1.10)$.
We define a function $ \tilde{f}:[0, \infty)\rightarrow \mathbb{R}$ by
 \begin{eqnarray}
\tilde{f}(x)=\sum_{k=0}^{N}a_{k}x^{k},  \ \ \   x\in \mathbb{R}_{+},
\end{eqnarray}
where $  a_{N}>0$ and $a_{k} \in \mathbb{R},k=0,1,2,...,N-1.$
Let $f:\mathbb{R}^{3}\rightarrow  \mathbb{R}^{3}$ given by
 \begin{eqnarray}
f(\mathbf{d})= \tilde{f}(|\mathbf{d}|^{2})\mathbf{d}.
\end{eqnarray}
Denote by ${F}: \mathbb{R}^{3} \rightarrow \mathbb{R}$ the Fr$\acute{e}$chet differentiable map such that for any $\mathbf{d}\in \mathbb{R}^{3}$ and $\xi\in \mathbb{R}^{3}$
\begin{eqnarray}
{F}'(\mathbf{d})[\xi]=f(\mathbf{d})\cdot \xi.
\end{eqnarray}
Set $\tilde{F}$ to be an antiderivative of $\tilde{f}$ such that $\tilde {F}(0)=0.$ Then
$$\tilde{F}(x)= \sum_{k=0}^{N}\frac{a_{k}}{k+1}x^{k+1},  \ \ \   x\in \mathbb{R}_{+}.$$
\par
The Ericksen--Leslie system is well suited for describing many special flows for the
materials, especially for those with small molecules, and is widely accepted in the
engineering and mathematical communities studying liquid crystals. System (1.1)--
(1.3) with $f(\mathbf{d})$ given by $(1.10)$ can be possibly viewed as the simplest mathematical model, which keeps the
most important mathematical structure as well as most of the essential difficulties
of the original Ericksen--Leslie system (see $\cite{LL}$). This deterministic system with Dirichlet
boundary conditions has been studied in a series of work not only theoretically
(see $\cite{LL}, \cite{LL1} $) but also numerically (see $\cite{LW}, \cite{LW1} $).

\par
The introduction of
stochastic processes in nematic liquid crystals flows is aimed at accounting for a number of
uncertainties and errors:\\
(1)The state of the nematic liquid crystals  is strongly dependent on the state of the environment. In natural systems external noise is often quite large. At an instability point the systern is sensitive even to infinitesimally small perturbations and the role of external noise has to be investigated in the vicinity of a transition point. And experimental investigation also showed that, in the case of electrohydrodynamic instabilities, the average value of the voltage necessary for the transition is shifted to higher and higher values as the intensity of the external noise is increased  i.e., the amplitude of the voltage fluctuations, is increased. Further study showed that the average voltage necessary to induce the transition to turbulent behavior, increases with the variance of the voltage fluctuations (see $\cite{HDLC}$ ). For more details one can also refer to $\cite{BHR1, BHR2, BMAA, HL, SM, SS}.$\\
(2)Rheological predictions of the behavior of complex fluids like these, often start with the derivation of macroscopic, approximate equations for quantities of interest using various  closure approximations. The difficulty in obtaining accurate closures has motivated the extensive, in recent years, use of direct simulations, either of the PDE governing the orientation distribution function, or of the equivalent stochastic differential equation, via ¡°Brownian dynamics¡± simulations. The latter have the advantage that they are amenable to use with models with many internal degrees of freedom (as opposed to the PDE approach in which the ¡°curse of dimensionality¡± precludes realistic computation). For more details one can see $\cite{G, JDP, LO, SK1,SK2}.$

\par
Despite the developments in the deterministic case, the theory for the
stochastic  nematic liquid crystals remains underdeveloped. To the best of our knowledge, there are few works on the stochastic nematic liquid crystals.  In the papers $\cite{BHR1, BHR2}$, Z.Brzezniak, E.Hausenblas and P.Razafimandimby have considered the model perturbed by multiplicative Gaussian noise and have proved the global well-posedness for the weak solution and strong solution in 2-D case. When the noise is jump and the dimension is two,  Z.Brzezniak, U.Manna and A.A. Panda in $\cite{BMAA}$ obtained the same result as the case of Gaussian noise. A weak martingale solution is also established for the three dimensional stochastic nematic liquid crystals with jump noise in $\cite{BMAA}$.

\par
One natural problem arising from this global existence result is the dynamical behavior of the 2D stochastic system.

\par
Under the periodic boundary conditions or the assumption $\mathbf{d}(t,x)=\mathbf{d}_{0}(x)$ for $(x,t)\in \Gamma\times \mathbb{R}^{+} ,$  the existence of global attractor is established for the ${deterministic}\ {model}$(see $\cite{GW, Sh} $). However, there is no published result about the existence of random attractor for the stochastic nematic liquid crystals flows. One reason is the absence of the $basic$ $balance$ $law$ which results in the failure of applying the method of deterministic model to the stochastic model. The other important reason is due to the different boundary conditions   between the deterministic model and stochastic model. As we see if the boundary condition is given by $(1.4)$, the following equality is not ture
\[
\langle \Delta \mathbf{d}-f(\mathbf{d}), \Delta (\Delta \mathbf{d}-f(\mathbf{d}) )     \rangle= -| \nabla(\Delta \mathbf{d}-f(\mathbf{d})|_{2}^{2},
\]
where $\langle ,  \rangle$ and $|\cdot|_{2}$ denote the inner product and norm in $(L^{2}(\mathbf{D}))^{3}.$ Therefore, we can not obtain the global existence of the strong solution in partial differential equations sense.

\par
In this article, we firstly improve the bounds for the solutions to $(1.1)-(1.5).$ These bounds are uniform with respect to present time and initial time (see Lemma 3.1). These estimates of the uniform boundedness improve previous bounds obtained in $\cite{BHR1, BHR2, BMAA},$ in which the bounds of $\mathbf{d}$ grow exponentially with respect to present time or initial time. In obtaining these time-uniform $a$ $priori$ estimates for orientation field $\mathbf{d}$ (for example in space $(L^{2}(D))^{3}$),  the power of $\mathbf{d}$ from the nonlinear $f(\mathbf{d})$ will be much bigger  (see $(1.12)$) than two. Using the Gronwall inequality (a standard argument) we will obtain that the solutions have exponential growth which is not sufficient to ensure the existence of random attractor. To overcome the difficulty, our idea is that after applying  It\^{o} formula in Banach space $(L^{4N+2}(\mathbf{D}))^{3}, N>1,$ to $\mathbf{d}$, we try to take advantage of the property of the logarithmic function to reduce the power from nonlinear term.
Roughly speaking, for a positive $f(t), t\in \mathbb{R}_{+},$ if we want to consider it's uniform boundedness with respect to time $t,$  we just need to estimate $\ln (1+f(t)).$ If  $\ln (1+f(t))$ is uniformly bounded with respect to $t,$ so is $f(t).$ Using this new technique
we obtain the uniform estimates for orientation field $\mathbf{d}$ ( see Lemma 3.1) which opens a way  to study the long-time behavior of stochastic nematic liquid crystals.

\par
To show the existence of random attractor, another problem needed to be addressed is to prove the solution $(\mathbf{v}, \mathbf{d})$ to $(1.1)-(1.5)$ is indeed a stochastic flow. The difficulty lies in $(1.3),$ we should show $\mathbf{d}$ to $(1.3)$ is a stochastic flow. But for this equation, we can not follow the method in $\cite{CF}$ of constructing a equivalent vector valued partial differential equation with random coefficients to prove that the  stochastic orientation field is a stochastic flow.
 To overcome the difficulty we construct a linear stochastic partial differential equations (SPDE) with linear  Stratonovich multiplicative noise whose scalar valued solution is a stochastic flow. Then for different $h$, we will obtain different linear and scalar valued SPDEs whose solutions are stochastic flow. Taking advantage of the relationship between these linear SPDEs and $(1.3)$, we can prove one component of the solution $\mathbf{d}$ to $(1.3)$ is indeed a stochastic flow. Then repeating the arguments, we can infer that each component of $\mathbf{d}$ is a stochastic flow which implies the flow property of stochastic field $\mathbf{d}$. For the detail we can refer to Proposition 3.2 and Remark 3.3.

\par
Our main goal of this article is to show the existence of random attractor in the solution space $\mathbf{H}\times \mathbb{H}^{1}$.
As we know, the sufficient condition for ensuring the existence of random attractor in $\mathbf{H}\times \mathbb{H}^{1}$ is to
obtain an absorbing ball which is compact in $\mathbf{H}\times \mathbb{H}^{1}$. The common method is to derive  uniform a priori estimates in the functional space $\mathbf{V}\times \mathbb{H}^{2} $ which is the strong solution space. However, the global existence of strong solution is unavailable due to the Neumann boundary condition. Here, we use a compactness arguments of the stochastic flow and regularity of the solutions to construct a compact absorbing ball in the function space $ \mathbf{H}\times \mathbb{H}^{1}$.  We complete the proof of the existence of random attractor by four steps. Firstly, using the Lemma 3.1, we obtain the absorbing ball in the weak solution $\mathbf{H}\times \mathbb{H}^{1}$ (see Proposition 3.1). Secondly, we will verify the two a priori estimates of Aubin-Lions compact lemma to obtain a convergent subsequence of $(\mathbf{v}, \mathbf{d})$  which converges almost everywhere with respect to time $t\in [s, T], -\infty<s<T<\infty$ (see Proposition 3.3 and Proposition 3.4).
Thirdly, in order to show the solution operator is a stochastic dynamical system  we show that
$
 \mathbf{v}\in C([0,T];\mathbf{H})\ \mathrm{and} \  \mathbf{d}\in C([0,T];\mathbb{H}^{1}),
$
which improves the regularity of the solution $(\mathbf{v},\mathbf{d})$ obtained in $\cite{BHR1}.$
For the details we can refer to Corollary 3.1.
 Using the regularity of the solutions to $(1.1)-(1.5)$ and the Aubin-Lions Lemma we prove in Proposition 3.5 that the solution operators are almost surely compact in $\mathbf{H}\times \mathbb{H}^{1}$ for all $(t,s)$ satisfying $ -\infty<s<t<\infty$.  Finally,  in Proposition 3.6  using the compact solution operator to act on the the absorbing ball yields a new set which is compact and absorbing in $\mathbf{H}\times \mathbb{H}^{1}$.  The existence of the random attractor in the weak solution space $\mathbf{H}\times \mathbb{H}^{1}$ follows directly from Proposition 3.6.

\par
This article illustrates  some advantage of our method over the common method. For the present model, following the common method to prove the existence of random attractor, we need to obtain uniform a priori estimates in a function space $\mathbf{V}\times \mathbb{H}^{2}.$ As we see it is very difficult, i.e., the uniform a priori estimates for  $(1.1)-(1.5)$ in more regular function space than the solution space is  not available.   Our method here avoid doing estimates in function spaces $\mathbf{V}\times \mathbb{H}^{2}$, but prove the compact absorbing ball in $\mathbf{H}\times \mathbb{H}^{1}$ indeed  exists .

\par
The remaining of this paper is organized as follows. In section $2,$ we state some preliminaries and recall some results.  The existence of random attractor is presented in section $3$. As usual, constants
$C$ may change from one line to the next, unless, we give a special declaration
; we denote by $C(a)$ a constant which depends on some parameter $a.$

\section{Preliminaries}
For $1\leq p\leq \infty,$ let $L^{p}(\mathbf{D})$ be the usual Lebesgue spaces with the norm $|\cdot|_{p}$ .  For a positive integer
$m,$ we denote by $(H^{m,p}(\mathbf{D}), \|\cdot\|_{m,p})$  the usual Sobolev spaces, see($\cite{ARA}$). When $p=2,$ we denote by
$(H^{m}(\mathbf{D}), \|\cdot\|_{m})$ with inner product $\langle, \rangle_{H^{m}}$. Let
\begin{eqnarray*}
\mathcal{V}=\{\mathbf{\upsilon}\in (C_{0}^{\infty}(\mho))^{2}&:& \nabla \cdot \mathbf{\upsilon} =0 \}.
\end{eqnarray*}

We denote by $\mathbf{H}, \mathbf{V}$ and $\mathbf{H}^2$ be the closure spaces of $\mathcal{V}$ in $(L^{2}(\mathbf{D}))^{2}, (H^{1}(\mathbf{D}))^{2}$ and $(H^{2}(\mathbf{D}))^{2}$ respectively. And set $|\cdot|_{2}$ and $\langle , \rangle$ to be the norm and inner product of  $\mathbf{H}$ respectively.
The notation $\langle , \rangle$ is also used to denote the inner product in $(L^{2}(\mathbf{D}))^{2}.$ By the Poincar\'{e} inequality,  there exists a constant $c$ such that for any $\mathbf{v}\in \mathbf{V}$ we have $\|\mathbf{v}\|_{1}\leq c|\nabla \mathbf{v}|_{2}$. Without confusion, we let $\|\cdot\|_{1}$ and $\langle, \rangle_{\mathbf{V}}$ stand for the  norm and the inner product in $\mathbf{V}$ respectively, where $\langle, \rangle_{\mathbf{V}}$ is defined by
\begin{eqnarray*}
\langle \mathbf{v}_{1},  \mathbf{v}_{2}\rangle_{\mathbf{V}}:=\int_{\mathbf{D}}\nabla \mathbf{\upsilon}_{1}\cdot \nabla \mathbf{\upsilon}_{2}d\mathbf{D},\ \ \mathrm{for}\  \mathbf{v}_{1}, \mathbf{v}_{2}\in \mathbf{V}.
\end{eqnarray*}
Denote by $\mathbf{V}'$ the dual space of $\mathbf{V}$. And define the linear operator $A_{1}:\mathbf{V} \mapsto \mathbf{V}',$ as the following:
\begin{eqnarray*}
 \langle A_{1}\mathbf{v}_{1},    \mathbf{v}_{2} \rangle=\langle \mathbf{v}_{1},  \mathbf{ v}_{2} \rangle_{\mathbf{V }},\ \   \mathrm{for} \ \mathbf{v}_{1},\mathbf{v}_{2}\in  \mathbf{V}.
\end{eqnarray*}
Since the operator $A_{1}$ is positive selfadjoint with compact resolvent,
by the classical spectral theorems there exists a sequence $\{\alpha_{j}\}_{j\in \mathbb{N}} $ of eigenvalues of $A_{1}$ such that
$$0<\alpha_{1}\leq \alpha_{2}\leq \cdots, \ \ \alpha_{j}\rightarrow\infty$$
corresponding to the eigenvectors $e_{j}.$ Assume
\begin{eqnarray}
 \sum_{i=1}^{\infty} \lambda_{i}\alpha_{i}^{2}<\infty.
\end{eqnarray}
For arbitrary constant $T>0$ and $j\in \mathbb{N} ,$ we define
$$\mathbf{z}^{j}(t)= \sum_{n=1}^{j}\sqrt{\lambda_{n}}\int_{0}^{t}e^{-A_{1}(t-s)}e_{n}dB_{n}(s),\ \ t\in[0,T]  $$
and
$$\mathbf{z}(t)= \sum_{n=1}^{\infty}\sqrt{\lambda_{n}}\int_{0}^{t}e^{-A_{1}(t-s)}e_{n}dB_{n}(s),\ \ t\in[0,T]  .$$
Obviously,
$$\mathbf{z}^{j}(w)\in C([0,T];\mathbf{H}^{2}),\ \mathbb{P}-a.e.\ \omega\in \Omega . $$
For $k\in \mathbb{N}$ and $k> j, $ In view of an infinite dimensional
version of Burkholder-Davis-Gundy type of inequality for stochastic convolutions (see Theorem 1.2.6 in $\cite{DZ, Liu}$ and references therein), we have
\begin{eqnarray*}
&&E\sup\limits_{t\in [0,T] }\|A_{1}(\mathbf{z}^{j}-\mathbf{z}^{k}) \|^{2}_{L^{2}} \\
&\leq&CT\sum_{n=j+1}^{k} \lambda_{n}\alpha_{n}^{2}\rightarrow 0,\ \ as\ j\rightarrow\infty.
\end{eqnarray*}
Therefore
\begin{eqnarray}
\mathbf{z}(w)\in  C([0,T]; \mathbf{H}^{2}),\ \mathbb{P}-a.e.\ \omega\in \Omega.
\end{eqnarray}
Let $\mathbb{ H}^{m}=({H}^{m}(\mathbf{D}))^{3},m=0, 1,2,.... $ When $m=0,$ set $\mathbb{H}=\mathbb{H}^{0}= (L^{2}(\mathbf{D}))^{3} $ for simplicity. Denote the dual space of $\mathbb{ H}^{m}$ by $\mathbb{ H}^{-m} .$ Then, similarly, we define the linear operator $A_{2}:\mathbb{H}^{1} \mapsto \mathbb{H}^{-1} $ as
\begin{eqnarray*}
 \langle A_{2}\mathbf{d}_{1},    \mathbf{d}_{2} \rangle=\langle \mathbf{d}_{1},  \mathbf{ d}_{2} \rangle_{H^{1}},\ \   \mathrm{for} \ \mathbf{d}_{1},\mathbf{d}_{2}\in \mathbb{H}^{1} .
\end{eqnarray*}

Let $D(A_{1}):=\{\eta\in \mathbf{H}, A_{1}\eta\in \mathbb{H }   \}$ and $D(A_{2}):= \{\theta\in \mathbb{H}^{1}, A_{2}\eta\in \mathbb{H}   \}.$ Because $A_{1}^{-1}$ and $A_{2}^{-1}$ are self-adjoint compact operators in $\mathbf{H}$ and $\mathbb{H}$ respectively, thanks to the classic spectral theory, we can define the power $A_{i}^{s}$ for any $s\in \mathbb{R}.$ Then $D(A_{i})'= D(A_{i}^{-1})$ is the dual space of $D(A_{i})$. Furthermore, we have the compact embedding relationship
\begin{eqnarray*}
D(A_{1})\subset \mathbf{V} \subset \mathbf{H} \subset \mathbf{V}'\subset D(A_{1})',
\end{eqnarray*}
and
\begin{eqnarray*}
\langle \cdot,  \cdot \rangle_{\mathbf{V}}=\langle A_{1}\cdot, \cdot    \rangle= \langle A_{1}^{\frac{1}{2}}\cdot, A_{1}^{\frac{1}{2}}\cdot\rangle.
\end{eqnarray*}
Similarly,
\begin{eqnarray*}
D(A_{2})\subset \mathbb{H}^{1} \subset \mathbb{H} \subset \mathbb{H}^{-1}\subset D(A_{2})',
\end{eqnarray*}
and
\begin{eqnarray*}
\langle \cdot,  \cdot \rangle_{H^{1}}=\langle A_{2}\cdot, \cdot    \rangle= \langle A_{2}^{\frac{1}{2}}\cdot, A_{2}^{\frac{1}{2}}\cdot\rangle.
\end{eqnarray*}

\begin{definition}
We say a continuous $\mathbf{H}\times \mathbb{H}^{1}$ valued $(\mathcal{F}_{t}  )=(\sigma(W(s), s\in [0,t]) )$ adapted random field $(\mathbf{v}(.,t), \mathbf{B}(.,t) )_{t\in [0,T]}$ defined on $(\Omega, \mathcal{F}, \mathbb{P})$ is a weak solution to problem $(1.1)-(1.5)$ if for $(\mathbf{v}_{0}, \mathbf{d}_{0})\in \mathbf{H}\times \mathbb{H}^{1}$ the following conditions hold:
\begin{eqnarray*}
&& u\in C([0,T];\mathbf{H})\cap L^{2}([0,T]; \mathbf{V}),\\
&& \theta\in C([0,T];\mathbb{H}^{1})\cap L^{2}([0,T]; \mathbb{H}^{2}),
\end{eqnarray*}
and the integral relation
\begin{eqnarray*}
\langle \mathbf{v}(t), v   \rangle &+&\int_{0}^{t}\langle  A_{1}\mathbf{v}(s),  v  \rangle ds+\int_{0}^{t}\langle \mathbf{v}(s)\cdot\nabla \mathbf{v}(s),  v  \rangle ds\\
&&+ \int_{0}^{t}\langle\nabla \cdot (\nabla \mathbf{d}(s)\odot \nabla \mathbf{d}(s)), v\rangle ds=\langle \mathbf{v}_{0}, v \rangle+\langle W_{1}(t), v   \rangle ,\\
\langle \mathbf{d}(t),  d  \rangle&+&\int_{0}^{t}\langle  A_{2}\mathbf{d}(s),  d  \rangle ds+\int_{0}^{t}\langle \mathbf{v}(s)\cdot\nabla \mathbf{d}(s), d\rangle ds\\
&&=\langle \mathbf{d}_{0}, d   \rangle-\int_{0}^{t}\langle f(\mathbf{d}(s)), d\rangle ds+ \frac{1}{2}\int_{0}^{t}\langle \mathbf{d}\times h\times h, d    \rangle+\int_{0}^{t} \langle \mathbf{d} , d   \rangle dW_{2}(s),
\end{eqnarray*}
hold $a.s.$ for all $t\in [0,T]$ and $(v,d)\in \mathbf{V}\times \mathbb{H}.$
\end{definition}

To prove the existence of random attractor for stochastic liquid crystals flows, we need the
following result concerning global well-posedness of $(1.1)-(1.5)$. For the proof, one can follow the argument as in $\cite{BHR1}$ or $\cite{BMAA}$ with minor revisions.
\begin{theorem}
Let $(\mathbf{v}_{0}, \mathbf{d}_{0}) \in \mathbf{H}\times \mathbb{H}^{1} .$  Assume conditions $(2.14)$
hold. Then there exists a unique weak solution $(\mathbf{v}, \mathbf{d} )$ of the system $(1.1)-(1.5)$ on the interval $[0, T],$
which is Lipschitz continuous  with respect to the initial data  in $\mathbf{H} \times \mathbb{H}^{1}.$
\end{theorem}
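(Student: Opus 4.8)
The plan is to establish existence via a Galerkin approximation combined with the stochastic compactness method, then derive uniqueness and Lipschitz dependence through an energy estimate on the difference of two solutions. First I would set up finite-dimensional approximations: project the velocity equation onto the span of the first $n$ eigenvectors $e_{1},\dots,e_{n}$ of $A_{1}$, and the orientation equation onto a corresponding Galerkin basis in $\mathbb{H}^{1}$ adapted to $A_{2}$. This yields a system of finite-dimensional Itô SDEs (after converting the Stratonovich term $(\mathbf{d}\times h)\circ\dot W_{2}$ into its Itô form, producing the correction $\tfrac{1}{2}(\mathbf{d}\times h)\times h$), for which local existence and uniqueness of the approximate solutions $(\mathbf{v}_{n},\mathbf{d}_{n})$ follow from standard SDE theory.

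Next I would derive the uniform a priori bounds that prevent blow-up and provide tightness. Testing the velocity equation against $\mathbf{v}_{n}$ and the orientation equation against $-\Delta\mathbf{d}_{n}+f(\mathbf{d}_{n})$, the troublesome transport and stress-coupling terms $\langle\nabla\cdot(\nabla\mathbf{d}\odot\nabla\mathbf{d}),\mathbf{v}\rangle$ and $\langle(\mathbf{v}\cdot\nabla)\mathbf{d},-\Delta\mathbf{d}+f(\mathbf{d})\rangle$ should cancel through the usual nematic liquid-crystal energy identity, leaving control of $\|\mathbf{v}_{n}\|_{\mathbf{H}}$, $\|\mathbf{d}_{n}\|_{\mathbb{H}^{1}}$, and the dissipation integrals $\int_{0}^{T}\|\mathbf{v}_{n}\|_{\mathbf{V}}^{2}$, $\int_{0}^{T}\|\mathbf{d}_{n}\|_{\mathbb{H}^{2}}^{2}$. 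Here I must handle the stochastic forcing: the additive noise $W_{1}$ contributes a trace term controlled by assumption $(2.14)$, and the multiplicative noise in $\mathbf{d}$ is managed by the Burkholder--Davis--Gundy inequality together with the algebraic identity $\langle\mathbf{d}\times h,\mathbf{d}\rangle=0$, which shows the Stratonovich noise does not directly feed energy into $|\mathbf{d}|_{2}^{2}$. Because $f$ is a higher-degree polynomial (degree governed by $N$ in $(1.11)$--$(1.12)$), the sign condition $a_{N}>0$ guarantees the potential $\tilde F$ is coercive, so the $\langle f(\mathbf{d}),\mathbf{d}\rangle$ contributions are absorbed rather than obstructive. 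Applying a stochastic Gronwall argument then yields moment bounds uniform in $n$.

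With these bounds I would extract (via tightness on a suitable path space, using Aubin--Lions type compact embeddings for the time-regularity supplied by fractional estimates on $\partial_{t}\mathbf{v}_{n},\partial_{t}\mathbf{d}_{n}$ in negative-order spaces) a subsequence converging in law, then apply a Skorokhod-type representation to pass to the limit in the nonlinear terms on a new probability space and identify the limit as a weak solution in the sense of the Definition. Since the theorem asserts existence on the original space with a strong (pathwise) solution, I would close the argument by combining this martingale solution with the uniqueness result below via a Gyöngy--Krylov or Yamada--Watanabe argument, upgrading to pathwise existence.

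The Lipschitz continuity and uniqueness form the sharpest part, and I expect the nonlinear polynomial $f$ to be the main obstacle. For two solutions $(\mathbf{v}_{1},\mathbf{d}_{1})$ and $(\mathbf{v}_{2},\mathbf{d}_{2})$, set $\mathbf{w}=\mathbf{v}_{1}-\mathbf{v}_{2}$, $\boldsymbol{\rho}=\mathbf{d}_{1}-\mathbf{d}_{2}$; because the noises are identical, the additive noise cancels entirely and the multiplicative noise difference becomes linear in $\boldsymbol{\rho}$, so Itô's formula for $|\mathbf{w}|_{2}^{2}+\|\boldsymbol{\rho}\|_{1}^{2}$ gives a closed estimate. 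The difficulty is bounding $\langle f(\mathbf{d}_{1})-f(\mathbf{d}_{2}),\cdot\rangle$: since $f$ is only locally Lipschitz with polynomial growth, I would use the one-sided monotonicity-type structure of $\tilde f$ together with the already-established $L^{p}$ bounds on $\mathbf{d}_{i}$ to control the difference by $C(\|\mathbf{d}_{1}\|,\|\mathbf{d}_{2}\|)(|\mathbf{w}|_{2}^{2}+\|\boldsymbol{\rho}\|_{1}^{2})$ with an integrable random coefficient. The transport and stress terms are handled by Ladyzhenskaya's inequality in $2$D and Young's inequality, absorbing gradients into the dissipation. A stochastic Gronwall lemma then delivers both uniqueness and the quantitative Lipschitz bound in $\mathbf{H}\times\mathbb{H}^{1}$, with the constant depending on the data through the a priori estimates established in the existence step.
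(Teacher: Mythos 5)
Your proposal follows essentially the same route as the paper, which in fact offers no proof of its own for this theorem but explicitly delegates it to \cite{BHR1, BMAA}, where the argument is precisely your scheme: Galerkin approximation with the It\^{o} correction $\tfrac{1}{2}(\mathbf{d}\times h)\times h$, the cancellation of the transport and Ericksen-stress terms in the energy identity with coercivity supplied by $a_{N}>0$, stochastic compactness with a Skorokhod/pathwise-uniqueness upgrade, and a difference estimate closed by a stochastic Gronwall lemma using the $2$D embeddings to tame the polynomial $f$. I see no substantive gap; your reconstruction matches the cited proof in both structure and the key estimates.
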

\begin{Rem}
 In Theorem 3.2 of $\cite{BHR1}$, it is proved that the weak solution $(\mathbf{v}, \mathbf{d})$ to $(1.1)-(1.5)$ satisfies
\begin{eqnarray*}
\mathbf{v}\in C([0,T]; \mathbb{V}^{-\beta})\  \mathrm{and}\ \mathbf{d}\in C([0,T];\mathbb{X}_{\beta-\frac{1}{2}}),\  a.s.,\  \beta \in (0, \frac{1}{2}).
\end{eqnarray*}
But this kind of regularity can not make the solution operator be a stochastic dynamical system. In Corollary 3.1, we prove
\begin{eqnarray*}
&& \mathbf{v}\in C([0,T];\mathbf{H}),\ \mathrm{and}\ \mathbf{d}\in C([0,T];\mathbb{H}^{1}),\ a.s..
\end{eqnarray*}
\end{Rem}
\par
In the following, we recall the notations and results in stochastic dynamical systems which will be use to prove the main results of this article.
\par
Let $(X, d)$ be a polish space and $(\tilde{\Omega}, \tilde{\mathcal{F}}, \tilde{\mathbb{P}}  )$ be a probability space, where $ \tilde{\Omega}$ is the two -sided Wiener space $C_{0}(\mathbb{R}; X )$ of continuous functions with values in $X$, equal to $0$ at $t=0$. We consider a family of mappings
$S(t,s;\omega):X\rightarrow X,\ \ -\infty<s \leq t< \infty,$ parametrized by $\omega\in \tilde{\Omega},$ satisfying for $ \tilde{\mathbb{P}}$-$a.e.\ \omega$ the following properties (i)-(iv):
\par
(i)$\ \ S(t,r;\omega)S(r,s;\omega)x= S(t,s;\omega)x$ for all $s\leq r\leq t$ and $x\in X;$
\par
(ii)$\ \ S(t,s;\omega)$ is continuous in $X,$ for all $s\leq t;$
\par
(iii)\ \  for all $s<t$ and $x\in X$, the mapping
\[
\omega\mapsto S(t,s;\omega)x
\]
\ \ \ \ \ \ \ \ \ \ \ \ is measurable from $(\tilde{\Omega},\tilde{\mathcal{F}})$ to $(X, \mathcal{B}(X )  )$ where $\mathcal{B}(X ) $ is the Borel-$\sigma$- algebra of $ X$;
\par
(iv)\ \ for all $t, x\in X,$ the mapping $s\mapsto S(t,s;\omega)$ is right continuous at any point.
\par
A set valued map $K: \tilde{\Omega}\rightarrow 2^{X}$ taking values in the closed subsets of $X$ is said to be measurable if for each $x\in X$ the map
$\omega\mapsto d(x, K(\omega))$ is measurable, where $d(A,B)=\sup\{\inf\{d(x,y):y\in B \}:x\in A \}$ for $A,B \in 2^{X}, A,B\neq \emptyset;$
and $d(x,B)=d(\{x\},B).$ Since $ d(A,B)=0$ if and only if $A\subset B, d$ is not a metric. A closed set valued measurable map $K:\tilde{\Omega}\rightarrow 2^{X}$ is named a random closed set.
\par
Given $t\in \mathbb{R}$ and $\omega\in \tilde{\Omega}, K(t,\omega)\subset X$ is called an attracting set at time $t$ if , for all bounded sets $B\subset X,$
\[
d(S(t,s;\omega)B, K(t,\omega) )\rightarrow 0,\ \ provided\ s\rightarrow -\infty.
\]
Moreover, if for all bounded sets $B\subset X,$ there exists $t_{B}(\omega)$ such that for all $s\leq t_{B}(\omega)$
\[
S(t,s;\omega)B\subset K(t,\omega),
\]
we say $K(t,\omega) $ is an absorbing set at time $t.$

Let $\{\vartheta_{t}:\tilde{\Omega}\rightarrow \tilde{\Omega}   \}, t\in T, T=\mathbb{R},$ be a family of measure preserving transformations of the probability space $(\tilde{\Omega}, \tilde{\mathcal{F}},\tilde{ \mathbb{P}} )$ such that for all $s< t$ and $\omega\in \tilde{\Omega}$
\par
(a) $(t,\omega)\rightarrow \vartheta_{t}\omega$ is measurable;
\par
(b) $\vartheta_{t}(\omega)(s)=\omega(t+s)-\omega(t)$;
\par
(c) $S(t,s;\omega)x=S(t-s,0;\vartheta_{s}\omega)x.$\\
Thus $(\vartheta_{t} )_{t\in T}$ is a flow, and
$((\tilde{\Omega}, \tilde{\mathcal{F}},\tilde{ \mathbb{P}} ), (\vartheta_{t} )_{t\in T} )$ is a measurable dynamical system.

\begin{definition}
Given a bounded set $B\subset X$, the set
\begin{eqnarray*}
\mathcal{A}(B,t,\omega)=\bigcap\limits_{T\leq t}\overline{\bigcup\limits_{s\leq T}S(t, s,\omega)B}
\end{eqnarray*}
is said to be the $\Omega$-limit set of $B$ at time $t$. Obviously, if we denote $\mathcal{A}(B,0,\omega)=\mathcal{A}(B,\omega),$ we have
$\mathcal{A}(B,t,\omega)=\mathcal{A}(B,\vartheta_{t}\omega).$
\end{definition}
We may identify
\begin{eqnarray*}
\mathcal{A}(B,t,\omega)=\{x\in X &:  &\mathrm{there}\ \mathrm{exists}\ s_{n}\rightarrow -\infty\ \mathrm{and}\ x_{n}\in B\nonumber\\
 &&\mathrm{such}\ \mathrm{that}\ \lim\limits_{n\rightarrow\infty}S(t,s_{n},\omega)x_{n}=x\}.
\end{eqnarray*}
Furthermore, if there exists a compact attracting set $K(t,\omega)$ at time $t,$ it is not difficult to check that $\mathcal{A}(B,t,\omega)$ is a nonempty compact subset of $X$ and $\mathcal{A}(B,t,\omega)\subset K(t,\omega). $
\begin{definition}
If, for all $t\in \mathbb{R}$ and $\omega\in \tilde{\Omega},$ the random closed set $\omega\rightarrow \mathcal{A}(t,\omega)$ satisfying the following properties:
\par
(1) $\mathcal{A}(t,\omega)$ is a nonempty compact subset of $X,$
\par
(2) $\mathcal{A}(t,\omega)$ is the minimal closed attracting set,
i.e., if $\tilde{\mathcal{A}}(t,\omega)$ is another closed attracting set, then $\mathcal{A}(t,\omega)\subset \tilde{\mathcal{A}}(t,\omega),$
\par
(3) it is invariant,  in the sense that, for all $s\leq t,$
 \[
 S (t,s;\omega)\mathcal{A}(s,\omega)=\mathcal{A}(t,\omega),
 \]
$\mathcal{A}(t,\omega)$  is called the random attractor.
\end{definition}
Let
$$\mathcal{A}(\omega)=\mathcal{A}(0,\omega). $$
Then the invariance property writes
$$S(t,s;\omega)\mathcal{A}(\vartheta_{s}\omega)=\mathcal{A }(\vartheta_{t}\omega).$$
To prove the existence of the random attractor, we will use the following sufficient condition given in $\cite{CDF} $.
For the convenience of reference, we cite it here.
\begin{theorem}
Let $(S(t, s; \omega))_{t\geq s, \omega\in \tilde{\Omega}}$ be a stochastic dynamical system
satisfying $\mathrm{(i)}, \mathrm{(ii)}, \mathrm{(iii)}$ and $\mathrm{(iv)}$. Assume that there exists a group $ \vartheta_{t}, t\in \mathbb{R},$ of measure preserving mappings such that condition $(c)$ holds and that, for $\tilde{\mathbb{P}}$-a.e.\ $\omega,$ there exists a compact attracting set $K(\omega)$ at time $0.$ For $\tilde{\mathbb{P}} $-a.e.\ $\omega,$ we set
$$\mathcal{A}(\omega)=\overline{\bigcup_{B\subset X}\mathcal{A}(B,\omega) } $$
where the union is taken over all the bounded subsets of $X$. Then we have for $\tilde{\mathbb{P}}  $-a.e.\ $\omega\in \tilde{\Omega}.$
\par
(1)\ $\mathcal{A}(\omega)$ is a nonempty compact subset of $X$, and if $X$ is connected,
it is a connected subset of $K(\omega)$.
\par
(2)\ The family $\mathcal{A}(\omega),\ \omega\in \Omega$, is measurable.
\par
(3)\ $\mathcal{A}(\omega)$ is invariant in the sense that
$$S(t,s;\omega)\mathcal{A}(\vartheta_{s}\omega)= \mathcal{A}(\vartheta_{t}\omega),\ \ s\leq t.$$
\par
(4)\ It attracts all bounded sets from $-\infty$: for bounded $B\subset X$ and $\omega\in \tilde{\Omega}$
\begin{eqnarray*}
d(S(t,s;\omega)B, \mathcal{A}(\vartheta_{t}\omega))\rightarrow 0,\ \ when\ s\rightarrow -\infty.
\end{eqnarray*}
Moreover, it is the minimal closed set with this property: if $\tilde{\mathcal{A}}(\vartheta_{t}\omega)$ is a closed attracting set, then $\mathcal{A}(\vartheta_{t}\omega)\subset \tilde{\mathcal{A}}(\vartheta_{t}\omega).$
\par
(5)\ For any bounded set $ B\subset X,\ d(S(t,s;\omega)B, \mathcal{A}(\vartheta_{t}\omega))\rightarrow 0$ in probability when $t\rightarrow \infty.$\\
And if the time shift $\vartheta_{t},t\in \mathbb{R}$ is ergodic
\par
(6)\ there exists a bounded set $B\subset X$ such that
\begin{eqnarray*}
\mathcal{A}(\omega)= \mathcal{A}(B, \omega).
\end{eqnarray*}
\par
(7)\ $ \mathcal{A}(\omega)$ is the largest compact measurable set which is invariant in sense of Definition $2.5.$
\end{theorem}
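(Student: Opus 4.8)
The plan is to construct $\mathcal{A}(\omega)$ exactly as the closure of the union of the $\Omega$-limit sets and to verify properties (1)--(7) in turn. First I would fix a bounded $B\subset X$ and analyze $\mathcal{A}(B,\omega)=\mathcal{A}(B,0,\omega)$ through its sequential characterization: $x\in\mathcal{A}(B,\omega)$ iff there are $s_n\to-\infty$ and $x_n\in B$ with $S(0,s_n,\omega)x_n\to x$. Since $K(\omega)$ is a compact attracting set at time $0$, we have $d(S(0,s_n,\omega)x_n,K(\omega))\to 0$, so every such limit lies in $K(\omega)$; hence $\mathcal{A}(B,\omega)\subset K(\omega)$ is a closed subset of a compact set and is therefore compact, while nonemptiness follows by extracting a convergent subsequence of $S(0,s_n,\omega)x_n$ using compactness of $K(\omega)$. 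Taking the union over all bounded $B$ and closing gives $\mathcal{A}(\omega)\subset K(\omega)$, which is nonempty and compact, yielding property (1); connectedness when $X$ is connected follows from the standard argument that the $\Omega$-limit set of a connected absorbing ball is connected.

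For invariance (property 3) I would exploit the two relations recorded in the excerpt, namely the cocycle identity $S(t,s;\omega)x=S(t-s,0;\vartheta_s\omega)x$ and $\mathcal{A}(B,t,\omega)=\mathcal{A}(B,\vartheta_t\omega)$. Using continuity of $S(t,s;\omega)$ on $X$ (property (ii)), pushing a defining sequence for a point of $\mathcal{A}(\vartheta_s\omega)$ forward through $S(t,s;\omega)$ gives the inclusion $S(t,s;\omega)\mathcal{A}(\vartheta_s\omega)\subset\mathcal{A}(\vartheta_t\omega)$; the reverse inclusion is obtained by taking a point of $\mathcal{A}(\vartheta_t\omega)$, lifting the associated times by $t-s$, and extracting from the resulting preimages a convergent subsequence inside the compact $K$, whose limit lies in $\mathcal{A}(\vartheta_s\omega)$ and maps onto the given point. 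This establishes $S(t,s;\omega)\mathcal{A}(\vartheta_s\omega)=\mathcal{A}(\vartheta_t\omega)$.

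Attraction and minimality (property 4) are immediate from the construction: each $\mathcal{A}(B,\omega)$ attracts $B$ by its very definition, so $\mathcal{A}(\omega)$ attracts every bounded set, and any closed attracting set must contain every $\Omega$-limit set and hence $\mathcal{A}(\omega)$. For property (5) I would convert pullback attraction from $-\infty$ into forward attraction in probability as $t\to\infty$ by invoking the measure-preserving property of $\vartheta_t$, which lets one transport the almost sure convergence of $d(S(0,s,\omega)B,\mathcal{A}(\omega))\to 0$ into convergence in probability of $d(S(t,s;\omega)B,\mathcal{A}(\vartheta_t\omega))$. Properties (6) and (7), assumed under ergodicity of $\vartheta_t$, follow from a zero--one argument: ergodicity forces a single pullback-absorbed ball to recover the whole attractor almost surely, and maximality among invariant compact measurable sets is a consequence of invariance together with the attraction property just established.

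The hard part will be the measurability statement (property 2): showing that $\omega\mapsto\mathcal{A}(\omega)$ is a measurable closed set-valued map, even though it is defined through an \emph{uncountable} union over all bounded $B\subset X$. My plan is to reduce this union to a countable one by exhausting $X$ with a countable increasing family of closed balls $\{B_k\}$ and verifying $\mathcal{A}(\omega)=\overline{\bigcup_k\mathcal{A}(B_k,\omega)}$; measurability of each $\omega\mapsto\mathcal{A}(B_k,\omega)$ then follows from the measurability hypothesis (iii) on $S$ combined with the sequential characterization of $\Omega$-limit sets, after which measurability of a countable union and of its closure is routine. Throughout, the separability of the Polish space $X$ is what allows all the almost sure statements above to hold off a single common null set, so I would keep track of the exceptional sets to ensure they can be merged.
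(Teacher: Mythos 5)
The paper contains no proof of this statement to compare yours against: Theorem 2.2 is quoted verbatim from Crauel--Debussche--Flandoli \cite{CDF} precisely so it can be invoked as a black box (``For the convenience of reference, we cite it here''). Your sketch is, in outline, the standard proof from \cite{CDF} and its predecessor \cite{CF} --- $\Omega$-limit sets trapped in the compact attracting set $K(\omega)$, forward inclusion for invariance by continuity (ii) and backward inclusion by compactness, pullback-to-forward conversion via measure preservation of $\vartheta_t$ for (5), and a countable exhaustion by balls for measurability --- and it is essentially correct. Two points deserve explicit attention if you write it out in full: first, your measurability reduction handles the uncountable union over bounded $B$, but the definition of $\mathcal{A}(B_k,\omega)$ still contains an uncountable union over $s\leq T$, and it is exactly hypothesis (iv) (right-continuity of $s\mapsto S(t,s;\omega)$) that lets you replace it by rational $s$ together with a countable dense subset of $B_k$ before invoking (iii); second, the backward inclusion in (3) uses an attracting compact set at times $s\neq 0$, which must be produced by shifting $K$ along $\vartheta_s$ and merging the (countably many, by the cocycle property) exceptional null sets --- you allude to this but it is where the bookkeeping actually bites.
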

In this article, to prove our main result we will use the compactness arguments of solution operator and the regularity of the solution to $(1.1)-(1.5)$ which rely on the two lemmas below. The first lemma is Aubin-Lions Lemma whose proof can be founded in $\cite{Au, Li}.$
\begin{lemma}
Let $B_{0}, B, B_{1}$ be Banach spaces such that $B_{0}, B_{1}$ are reflexive and $B_{0}\overset{c}{\subset}B\subset B_{1}.$ Define,
for $0<T<\infty,$
\begin{eqnarray*}
X:=\Big{\{}h \Big{|}h\in L^{2}([0,T]; B_{0}), \frac{dh}{dt}\in L^{2}([0,T]; B_{1})\Big{\}}.
\end{eqnarray*}
Then $X$ is a Banach space equipped with the norm $|h|_{L^{2}([0,T]; B_{0})}+|h'|_{L^{2}([0,T]; B_{1})}.$ Moreover, $$X \overset{c}\subset {L^{2}([0,T]; B)}.$$
\end{lemma}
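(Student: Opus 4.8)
The statement is the classical Aubin--Lions compactness lemma, so the plan is to run the standard two-ingredient argument: an interpolation (Ehrling-type) inequality that reduces strong convergence in $L^2([0,T];B)$ to strong convergence in the weaker space $L^2([0,T];B_1)$, together with a time-regularity argument that extracts the latter from the uniform bound on the derivative. That $X$ is a Banach space is routine: the given expression is a norm, and completeness follows because $L^2([0,T];B_0)$ and $L^2([0,T];B_1)$ are complete while the distributional derivative is closed. Indeed, a Cauchy sequence in $X$ converges to some $h$ in $L^2([0,T];B_0)$ and its derivatives converge to some $g$ in $L^2([0,T];B_1)$; since $B_0\subset B_1$ continuously forces convergence in $L^2([0,T];B_1)$ as well, testing against smooth scalar functions identifies $g$ with $h'$. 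For the compact embedding it then suffices to show that every sequence $\{u_n\}$ bounded in $X$ admits a subsequence converging strongly in $L^2([0,T];B)$.

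The first step is the interpolation inequality: using only the compactness of $B_0\overset{c}{\subset}B$ and the continuity of $B\subset B_1$, I would prove that for every $\eta>0$ there is $C_\eta>0$ with $\|v\|_B\leq \eta\|v\|_{B_0}+C_\eta\|v\|_{B_1}$ for all $v\in B_0$. This is a contradiction argument: negating it produces $v_k$ with $\|v_k\|_B=1$, $\|v_k\|_{B_0}$ bounded and $\|v_k\|_{B_1}\to 0$; compactness of $B_0\hookrightarrow B$ extracts a $B$-convergent subsequence whose limit has $B$-norm $1$ but $B_1$-norm $0$, contradicting $B\subset B_1$.

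The second step is a reduction via weak compactness. Since $B_0$ and $B_1$ are reflexive, so are $L^2([0,T];B_0)$ and $L^2([0,T];B_1)$, and from the bounded sequence I extract a subsequence with $u_n\rightharpoonup u$ in $L^2([0,T];B_0)$ and $u_n'\rightharpoonup u'$ in $L^2([0,T];B_1)$. Replacing $u_n$ by $u_n-u$, I may assume $u=0$. Integrating the Ehrling inequality in time gives $|u_n|_{L^2([0,T];B)}\leq \eta|u_n|_{L^2([0,T];B_0)}+C_\eta|u_n|_{L^2([0,T];B_1)}$, whose first term is bounded by $\eta M$ uniformly in $n$; hence it only remains to prove that $u_n\to 0$ strongly in $L^2([0,T];B_1)$, after which letting $\eta\to 0$ closes the estimate.

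The main obstacle is precisely this strong convergence in $L^2([0,T];B_1)$, where the derivative bound and the compact embedding $B_0\overset{c}{\subset}B_1$ enter. The difficulty is that the $L^2([0,T];B_0)$ bound gives no control of $u_n(t)$ in $B_0$ at a fixed time $t$, so Arzel\`a--Ascoli cannot be applied to $u_n$ directly. To circumvent this I would introduce the time-averages $u_n^h(t)=\frac{1}{h}\int_t^{t+h}u_n(s)\,ds$. For fixed $h$, Cauchy--Schwarz bounds $u_n^h(t)$ in $B_0$ uniformly in $n$ and $t$, while the bound on $u_n'$ makes $\{u_n^h\}_n$ equicontinuous into $B_1$; since $B_0\hookrightarrow B_1$ is compact, Arzel\`a--Ascoli yields precompactness of $\{u_n^h\}_n$ in $C([0,T-h];B_1)$, and $u_n\rightharpoonup 0$ forces the limit to be $0$. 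On the other hand, writing $u_n^h(t)-u_n(t)$ as a double time-integral of $u_n'$ and applying Cauchy--Schwarz gives $|u_n-u_n^h|_{L^2([0,T];B_1)}\leq C h^{1/2}|u_n'|_{L^2([0,T];B_1)}$, a bound uniform in $n$ that vanishes as $h\to 0$. Combining these through an $\varepsilon/3$ argument (fix $h$ small, then $n$ large) gives $u_n\to 0$ in $L^2([0,T];B_1)$. The technical heart, and the step most likely to require care, is this uniform-in-$n$ control of the mollification error together with the equicontinuity estimate for $u_n^h$.
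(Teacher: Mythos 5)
The paper never proves this lemma: it states it as the classical Aubin--Lions compactness lemma and defers the proof entirely to the cited references \cite{Au, Li}, so there is no in-paper argument to match yours against. Your proposal is the standard complete proof one finds in those references, and it is correct in all essentials: the Ehrling inequality by contradiction (using compactness of $B_0\subset B$ and the injectivity of $B\subset B_1$ to rule out a nonzero limit of $B$-norm one), reflexivity of $L^2([0,T];B_i)$ to extract weak limits and reduce to $u=0$, the integrated interpolation bound to shift the burden onto strong $L^2([0,T];B_1)$ convergence, and the Steklov averages $u_n^h$ treated by Arzel\`a--Ascoli (uniform $B_0$-bound $h^{-1/2}M$, uniform Lipschitz bound into $B_1$ from $\frac{d}{dt}u_n^h(t)=\frac{1}{h}(u_n(t+h)-u_n(t))$, and identification of the limit as $0$ by testing weak convergence against $\mathbf{1}_{[t,t+h]}\otimes\varphi$). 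Two small points to tidy. First, your mollification estimate is in fact better than claimed: squaring $\|u_n(t)-u_n^h(t)\|_{B_1}\le h^{1/2}\bigl(\int_t^{t+h}\|u_n'\|_{B_1}^2\,d\tau\bigr)^{1/2}$ and integrating in $t$ (Fubini) gives $|u_n-u_n^h|_{L^2([0,T-h];B_1)}\le h\,|u_n'|_{L^2([0,T];B_1)}$; either exponent suffices. Second, and the only real loose end: $u_n^h$ lives on $[0,T-h]$, so your $\varepsilon/3$ argument controls $u_n$ only there and you still owe the tail $\int_{T-h}^{T}\|u_n(t)\|_{B_1}^2\,dt$. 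This is easily patched and not a genuine gap: choosing $t_0$ with $\|u_n(t_0)\|_{B_1}\le T^{-1/2}|u_n|_{L^2([0,T];B_1)}$ and writing $u_n(t)=u_n(t_0)+\int_{t_0}^t u_n'(s)\,ds$ yields $\sup_{t\in[0,T]}\|u_n(t)\|_{B_1}\le T^{-1/2}|u_n|_{L^2([0,T];B_1)}+T^{1/2}|u_n'|_{L^2([0,T];B_1)}\le C$ uniformly in $n$, so the tail is $O(h)$ uniformly and vanishes in the same limit $h\to 0$. With that one-line addition your proof is complete and is exactly the argument the paper implicitly invokes.
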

The following lemma, a special case of a general result of Lions and Magenes $\cite{LM}$,  will help us to show the continuity of the solution to stochastic nematic liquid crystals with respect to time.
\begin{lemma}
Let $V , H, V'$ be three Hilbert spaces such that $V \subset H = H¡ä\subset V' $
, where$ H'$ and $V'$ are the dual spaces of $H$ and $V$ respectively. Suppose $u \in
L^{2}([0, T]; V )$ and $u'\in L^{2}([0, T]; V')$. Then $u$ is almost everywhere equal to a function
continuous from $[0, T]$ into $H$.
\end{lemma}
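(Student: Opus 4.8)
The plan is to exploit the Gelfand triple structure $V\subset H\cong H'\subset V'$ and to show, by regularizing $u$ in the time variable, that $u$ coincides almost everywhere with an element of $C([0,T];H)$. The heart of the matter is the energy identity
\begin{eqnarray*}
\frac{d}{dt}|u(t)|_{H}^{2}=2\langle u'(t),u(t)\rangle,
\end{eqnarray*}
where $|\cdot|_{H}$ denotes the norm of $H$ and $\langle\cdot,\cdot\rangle$ the duality pairing between $V'$ and $V$ (which, for arguments lying in $H$, reduces to the inner product of $H$ via the identification $H\cong H'$). Once this identity is established in integrated form, the desired continuity follows from a Cauchy argument in $C([0,T];H)$.

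First I would extend $u$ to a slightly larger interval containing $[0,T]$, preserving the properties $u\in L^{2}(V)$ and $u'\in L^{2}(V')$, so that a convolution in time is well defined up to the endpoints, and set $u_{\rho}=\rho_{\varepsilon}\ast u$ for a standard mollifier $\rho_{\varepsilon}$. Then $u_{\rho}\in C^{\infty}([0,T];V)$ with $u_{\rho}'=\rho_{\varepsilon}\ast u'$, and one has $u_{\rho}\to u$ in $L^{2}([0,T];V)$ and $u_{\rho}'\to u'$ in $L^{2}([0,T];V')$ as $\varepsilon\to0$. Because $u_{\rho}$ and $u_{\rho}'$ are smooth with values in $V$, the classical product rule applies and, upon integration, yields for $0\le s\le t\le T$
\begin{eqnarray*}
|u_{\rho}(t)|_{H}^{2}-|u_{\rho}(s)|_{H}^{2}=2\int_{s}^{t}\langle u_{\rho}'(\tau),u_{\rho}(\tau)\rangle\,d\tau.
\end{eqnarray*}

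Next I would prove that $\{u_{\rho}\}$ is Cauchy in $C([0,T];H)$. Applying the identity above to the difference $w=u_{\rho}-u_{\rho'}$ gives
\begin{eqnarray*}
|w(t)|_{H}^{2}=|w(s)|_{H}^{2}+2\int_{s}^{t}\langle w'(\tau),w(\tau)\rangle\,d\tau,
\end{eqnarray*}
and integrating this in $s$ over $[0,T]$ produces
\begin{eqnarray*}
T\,|w(t)|_{H}^{2}\le\int_{0}^{T}|w(s)|_{H}^{2}\,ds+2T\,\|w'\|_{L^{2}([0,T];V')}\,\|w\|_{L^{2}([0,T];V)}.
\end{eqnarray*}
Since $V\hookrightarrow H$ continuously, $u_{\rho}\to u$ in $L^{2}([0,T];H)$, so the right-hand side tends to $0$ uniformly in $t$; hence $u_{\rho}$ converges in $C([0,T];H)$ to some $\tilde u$. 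As $u_{\rho}\to u$ also in $L^{2}([0,T];H)$, we conclude $\tilde u=u$ almost everywhere, which exhibits the continuous $H$-valued representative of $u$.

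The main obstacle is not the limit passage, which is routine, but the careful justification of the product rule and of the pairing consistency. One must verify that $u_{\rho}'$, computed as the time-derivative of the convolution, indeed equals $\rho_{\varepsilon}\ast u'$ and takes values in $V$, so that $(u_{\rho}'(t),u_{\rho}(t))_{H}=\langle u_{\rho}'(t),u_{\rho}(t)\rangle$ genuinely holds; this rests on the density of $V$ in $H$ together with the identification $H\cong H'$. A secondary technical point is the extension of $u$ beyond $[0,T]$ needed to mollify up to the endpoints while retaining $u\in L^{2}(V)$ and $u'\in L^{2}(V')$; reflecting at the endpoints (or working on an enlarged interval) resolves this. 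With these two points settled, the argument above delivers the claim.
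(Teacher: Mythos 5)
Your proof is correct, but note that the paper does not actually prove this lemma: it is quoted as ``a special case of a general result of Lions and Magenes'' \cite{LM}, so the comparison is with the cited source rather than with any internal argument. The general theorem in \cite{LM} goes through interpolation theory: a function $u$ with $u\in L^{2}(0,T;V)$, $u'\in L^{2}(0,T;V')$ is shown (via the intermediate-derivatives and trace theorems) to be continuous with values in the interpolation space $[V,V']_{1/2}$, which for a Gelfand triple is identified with $H$. Your argument is instead the standard elementary proof of precisely this Hilbert-space special case (the one given, e.g., in Temam's Navier--Stokes book, Ch.~III, Lemma~1.2, and in Evans, \S 5.9): extend past the endpoints, mollify in time, establish $|u_{\rho}(t)|_{H}^{2}-|u_{\rho}(s)|_{H}^{2}=2\int_{s}^{t}\langle u_{\rho}'(\tau),u_{\rho}(\tau)\rangle\,d\tau$ for the smooth approximations, and run a Cauchy argument in $C([0,T];H)$; your integrated bound $T\,|w(t)|_{H}^{2}\le\|w\|_{L^{2}(0,T;H)}^{2}+2T\,\|w'\|_{L^{2}(V')}\|w\|_{L^{2}(V)}$ is uniform in $t$ and its right-hand side vanishes along the mollification parameter, so the limit function is the continuous representative. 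What your route buys is self-containedness and avoidance of interpolation machinery; what the Lions--Magenes route buys is generality (continuity into intermediate spaces in more general $L^{p}$ and fractional settings). One small correction to your closing discussion: the fact that $u_{\rho}'$ takes values in $V$ does not rest on the density of $V$ in $H$; it follows from the alternative representation $u_{\rho}'=\rho_{\varepsilon}'\ast u$, the convolution of the scalar derivative of the mollifier with the $V$-valued function $u$, which coincides with $\rho_{\varepsilon}\ast u'$ in the sense of distributions. The identification $H\cong H'$ is needed only for the consistency $(u_{\rho}'(t),u_{\rho}(t))_{H}=\langle u_{\rho}'(t),u_{\rho}(t)\rangle$, which then holds simply because $u_{\rho}'(t)\in V\subset H$.
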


\section{Random attractor for the weak solution}
One of our main results in this article is to prove:
\begin{theorem}
Let $ \mathbf{v}_{0}\in \mathbf{H}, \mathbf{d}_{0}\in \mathbb{H}^{1}$ in $(1.4)$ and $f(\mathbf{d})$ is given by $(1.12)$. Assume $|h|\ll 1$ and $(2.14)$
hold. Then the solution operator $(S(t,s;\omega))_{t\geq s,\omega\in \tilde{\Omega}} $ of $(1.1)-(1.5): S(t,s;\omega)(\mathbf{v}_{s}, \mathbf{d}_{s})=(\mathbf{v}(t), \mathbf{d}(t) ) $ has properties $\mathrm{(i)}-\mathrm{(iv)}$ of Theorem 2.2 and possesses a compact absorbing ball $\mathcal{B}(0,\omega)$ in $\mathbf{H} \times \mathbb{H}^{1}$ at time $0.$  Furthermore, for $\tilde{\mathbb{P}}$-a.e. $\omega,$ the set
$$\mathcal{A}(\omega)=\overline{\bigcup_{B\subset \mathbf{H}\times \mathbb{H}^{1}}\mathcal{A}(B,\omega) } $$
where the union is taken over all the bounded subsets of $\mathbf{H}\times \mathbb{H}^{1}$ is the random attractor of $(1.1)-(1.5)$ and possesses the properties $(1)-(7)$ of Theorem $2.2$ with space $X$ replaced by space $\mathbf{H}\times \mathbb{H}^{1}.$
\end{theorem}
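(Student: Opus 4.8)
The plan is to verify the hypotheses of the abstract criterion in Theorem 2.2 and then invoke it. Concretely, I would establish three things in turn: that $S(t,s;\omega)$ is a stochastic dynamical system satisfying (i)--(iv); that it admits a bounded random absorbing ball in $\mathbf{H}\times\mathbb{H}^{1}$; and that this ball can be upgraded to a \emph{compact} absorbing set, which then plays the role of the compact attracting set $K(\omega)$ demanded by Theorem 2.2.

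First, for the dynamical-system axioms: continuity (ii) and measurability (iii) follow from the Lipschitz dependence on initial data and the pathwise construction of the weak solution furnished by Theorem 2.1, while right-continuity (iv) in the initial time is routine from the same continuity. The delicate point is the cocycle identity (i), i.e.\ that the solution defines a genuine stochastic flow. For the velocity $\mathbf{v}$ this reduces, after subtracting the stationary convolution $\mathbf{z}(t)$ that absorbs the additive noise $W_1$, to a pathwise argument with random coefficients. The orientation field $\mathbf{d}$ is harder because of the Stratonovich multiplicative noise $(\mathbf{d}\times h)\circ\dot W_2$, which does not fit the vector-valued change-of-variable scheme of \cite{CF}. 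I would handle this, as indicated in Proposition 3.2 and Remark 3.3, by introducing auxiliary \emph{linear scalar} SPDEs driven by the same Stratonovich noise whose solutions are known to generate stochastic flows, and then identifying each component of $\mathbf{d}$ with such a solution through the algebraic structure of the cross product $\mathbf{d}\times h$ for suitably chosen $h$; componentwise this yields the flow property for $\mathbf{d}$.

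Second, the absorbing ball: here I would apply the time-uniform a priori estimates of Lemma 3.1. The essential novelty, needed precisely because the Neumann condition destroys the basic balance law, is the logarithmic energy estimate: applying It\^{o}'s formula in $L^{4N+2}(\mathbf{D})$ and then passing to $\ln(1+\cdot)$ tames the high-degree term coming from $f(\mathbf{d})$ (of degree exceeding two, cf.\ (1.12)), so that Gronwall produces bounds that are \emph{uniform in time} rather than exponentially growing. Combined with the regularity $\mathbf{z}\in C([0,T];\mathbf{H}^{2})$ from (2.15), these estimates give a random radius such that a ball in $\mathbf{H}\times\mathbb{H}^{1}$ absorbs every bounded set as $s\to-\infty$ (Proposition 3.1); the smallness assumption $|h|\ll 1$ enters exactly to keep the multiplicative-noise contributions subordinate.

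Third, and this is where I expect the main obstacle to lie, is compactness. The usual route---bootstrapping to uniform bounds in the strong-solution space $\mathbf{V}\times\mathbb{H}^{2}$ and invoking the compact embedding into $\mathbf{H}\times\mathbb{H}^{1}$---is unavailable, since the strong solution is ill-posed under the Neumann boundary condition. Instead I would extract compactness at the level of the weak solution via the Aubin--Lions Lemma 2.3: on each finite window $[s,T]$ one verifies its two hypotheses, namely boundedness of $(\mathbf{v},\mathbf{d})$ in $L^2([s,T];\mathbf{V})\times L^2([s,T];\mathbb{H}^{2})$ together with boundedness of the time derivatives in the appropriate dual spaces (Propositions 3.3 and 3.4), yielding a subsequence converging for a.e.\ $t$. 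Upgrading this to genuine continuity $\mathbf{v}\in C([0,T];\mathbf{H})$, $\mathbf{d}\in C([0,T];\mathbb{H}^{1})$ via the Lions--Magenes Lemma 2.4 (Corollary 3.1) then lets me show in Proposition 3.5 that each single-time map $S(t,s;\omega)$ is compact on $\mathbf{H}\times\mathbb{H}^{1}$. Finally (Proposition 3.6), letting the compact operator $S(t,t-1;\omega)$ act on the absorbing ball produces a set that is simultaneously compact and absorbing; this set furnishes the required $K(\omega)$, and a direct application of Theorem 2.2 delivers the random attractor $\mathcal{A}(\omega)$ together with properties (1)--(7).
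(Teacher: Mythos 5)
Your proposal is correct and takes essentially the same route as the paper: the paper's own proof of Theorem 3.1 is a one-line citation of Proposition 3.6 and Theorem 2.2, with the supporting chain developed exactly as you outline --- the logarithmic $L^{4N+2}$ estimates of Lemma 3.1, the absorbing ball of Proposition 3.1, the scalar-SPDE flow argument of Proposition 3.2, the Aubin--Lions estimates of Propositions 3.3--3.4, the continuity upgrade of Corollary 3.1 via the Lions--Magenes lemma, and the compactness of $S(t,s;\omega)$ in Proposition 3.5. The only cosmetic difference is that the paper builds the compact absorbing set as the closure of $S(t,s;\omega)B(s,r(\omega))$ for an arbitrary $s<t$ rather than your specific choice $S(t,t-1;\omega)$, which changes nothing of substance.
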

\begin{proof}
The results of this theorem follows directly from Proposition 3.6 and Theorem 2.2. \hspace{\fill}$\square$
\end{proof}
\par
 The rest of this section is to find a compact absorbing ball for $(1.1)-(1.5)$ in $\mathbf{H}\times \mathbb{H}^{1}$. We will achieve our goal by six steps. In subsection 3.1, we use $a$ $new$ $technique$ logarithmic energy estimates combined with It\^{o} formula to obtain the uniform a priori estimates in $(L^{4N+2}(D))^{3}$ which is very important to study the long-time behavior of the stochastic nematic liquid system (see Lemma 3.1 and the proof of Proposition 3.1). Then in  subsection 3.2, we obtain the absorbing ball for the solution to $(1.1)-(1.5)$ in the space $\mathbf{H}\times \mathbb{H}^{1}$ in Proposition 3.1. As the third step, in subsection 3.3, we use a new technique to prove the solution is indeed a stochastic flow in Proposition 3.2. In the next subsection, by Proposition 3.3 and Proposition 3.4 we verified  two a priori estimates of the Aubin-Lions lemma which is used to obtain a convergent subsequence of the solutions to $(1.1)-(1.5).$  In subsection 3.5, taking advantage of the convergent subsequence and the continuity of the solutions with respect to initial data in $\mathbf{H}\times \mathbb{H}^{1}$, we prove the solution operator $S(t,s; \omega)$ is almost surely compact from $\mathbf{H}\times \mathbb{H}^{1}$ to $\mathbf{H}\times \mathbb{H}^{1}$ for all $s,t\in \mathbb{R}, s<t\ (see\ Proposition\ 3.5).$ Finally, using the existence of absorbing ball and compactness of the solution operator, we construct a compact absorbing ball in Proposition 3.6.

\par
To study the long time behavior of $(1.1)-(1.5)$, we introduce a modified stochastic convolution. Let  $t\in \mathbb{R}$ and $\beta\in \mathbb{R}_{+}.$ For simplicity, we still define
\begin{eqnarray*}
\mathbf{z}(t):= \int_{-\infty}^{t}e^{-(t-s)(A_{1}+\beta)}dW_{1}(s)  .
\end{eqnarray*}
Then by $(2.15)$, we have $ \mathbf{z}(\omega)\in C([0,T]; \mathbf{H}^{2}), \mathbb{P}-a.e.$ and satisfies the linear equation
\begin{eqnarray*}
&&d\mathbf{z}=(-A_{1}\mathbf{z}-\beta \mathbf{z})dt+dW_{1},\\
&&\mathbf{z}(x,t)=0,\ \forall (x,t)\in \Gamma\times \mathbb{R},\\
&&\mathbf{z}(t_0)=\mathbf{z}_{t_0},
\end{eqnarray*}
where $\mathbf{z}_{t_0}=\int_{-\infty}^{t_0}e^{s(A_{1}+\beta)}dW(s).$
Let $ (\mathbf{v}_{t_{0}},\mathbf{d}_{t_{0}})\in \mathbf{H}\times \mathbb{H}^{1} ,$ then in view of Theorem 2.1, $( \mathbf{v},\mathbf{d})$ is the unique global weak solution to $(1.1)-(1.5)$ on $[t_{0}, \infty)$ with $\mathbf{v}(t_{0})= \mathbf{v}_{t_{0}}$ and $\mathbf{d}(t_{0})= \mathbf{d}_{t_{0}} .$
Making the classic change $(\mathbf{v},\mathbf{d})= (\mathbf{u}+\mathbf{z}, \mathbf{d})$, then $(\mathbf{u}, \mathbf{d})$ satisfies the following system $(3.16)-(3.20)$.
\begin{eqnarray}
d\mathbf{u}+((\mathbf{u}+\mathbf{z})\cdot \nabla (\mathbf{u}+\mathbf{z})+\nabla p-\Delta \mathbf{u})dt+\nabla\cdot(\nabla \mathbf{d}\odot \nabla \mathbf{d})dt&=&\beta \mathbf{z},\\
\nabla\cdot \mathbf{u}&=&0,\\
\mathbf{d}_{t}+[(\mathbf{u}+\mathbf{z})\cdot \nabla\mathbf{d}]- (\Delta \mathbf{d}- f(\mathbf{d}))-\frac{1}{2}(\mathbf{d}\times h)\times h&=&(\mathbf{d}\times h) \dot{W}_{2},\\
\mathbf{u}(x,t)=0,\  \frac{\partial\mathbf{d}(x,t)}{\partial \mathbf{n}}=0,\ \ (x,t)\in \Gamma &\times& [t_{0}, \infty), \\
\mathbf{u}|_{t=0}=\mathbf{v}_{t_0}(x)\ \mathrm{with}\ \nabla\cdot \mathbf{v}_{t_0}=0,\ \ \ \  \mathbf{d}|_{t=t_0}=\mathbf{d}_{t_0}(x),\ \ x&\in& \mathbf{D}.
\end{eqnarray}


\par
\noindent ${3.1.\ \mathbf{Absorbing}\ \mathbf{ball}\ \mathbf{of}\  \mathbf{d}\ \mathrm{in}\ ({L}^{4N+2}(\mathbf{D}))^{3}. } $
\begin{lemma}
Denote by $ \mathbf{d}(t,\omega; t_{0},\mathbf{d}_{0}) $ the weak solution to $(1.3)$ with $\mathbf{d}(t_{0})= \mathbf{d}_{0} $, then it is uniformly bounded w.r.t. time $t$ and initial time $t_{0}$ in $(L^{4N+2}(\mathbf{D} ))^{3}$ provided the initial data is bounded, i.e.
\begin{eqnarray*}
\sup\limits_{(t,t_{0})\in \{(t,t_{0})|t\in \mathbb{R}, t_{0}\leq t \}}| \mathbf{d}(t,\omega; t_{0},\mathbf{d}_{0})|_{4N+2}^{4N+2}<\infty.
\end{eqnarray*}
\end{lemma}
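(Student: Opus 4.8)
The plan is to reduce the claim to a pathwise, essentially noise-free, differential inequality for the scalar quantity $\Phi(t):=|\mathbf{d}(t)|_{4N+2}^{4N+2}=\int_{\mathbf{D}}|\mathbf{d}(t,x)|^{4N+2}dx$, and then to exhibit an absorbing threshold independent of the initial time $t_{0}$. After the change of variables $(\mathbf{v},\mathbf{d})=(\mathbf{u}+\mathbf{z},\mathbf{d})$, the orientation field satisfies the It\^{o} equation $(3.18)$, so I would apply the It\^{o} formula in the Banach space $(L^{4N+2}(\mathbf{D}))^{3}$ to the functional $\Phi(\mathbf{d})=\int_{\mathbf{D}}|\mathbf{d}|^{4N+2}dx$. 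Because the weak solution from Theorem 2.1 obeys $\mathbf{d}\in L^{2}([0,T];\mathbb{H}^{2})$ and $\mathbb{H}^{2}\hookrightarrow L^{\infty}(\mathbf{D})$ in dimension two, one has $\mathbf{d}(t)\in L^{6N+2}(\mathbf{D})$ for a.e.\ $t$, so every integral written below is finite.

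The decisive point is a term-by-term inspection of the resulting identity. \emph{Transport:} since $\nabla\cdot(\mathbf{u}+\mathbf{z})=0$, integration by parts gives $\int_{\mathbf{D}}|\mathbf{d}|^{4N}\mathbf{d}\cdot[(\mathbf{u}+\mathbf{z})\cdot\nabla\mathbf{d}]dx=\tfrac{1}{4N+2}\int_{\mathbf{D}}(\mathbf{u}+\mathbf{z})\cdot\nabla|\mathbf{d}|^{4N+2}dx=0$, so the coupling with the velocity field drops out entirely. \emph{Diffusion:} with the Neumann condition $\partial\mathbf{d}/\partial\mathbf{n}=0$ the boundary term disappears and $\int_{\mathbf{D}}|\mathbf{d}|^{4N}\mathbf{d}\cdot\Delta\mathbf{d}\,dx\le 0$. \emph{Noise:} the It\^{o} stochastic integral $\int_{\mathbf{D}}|\mathbf{d}|^{4N}\mathbf{d}\cdot(\mathbf{d}\times h)dx\,dW_{2}$ vanishes identically because $\mathbf{d}\perp(\mathbf{d}\times h)$; moreover the Stratonovich drift $\tfrac12\int_{\mathbf{D}}|\mathbf{d}|^{4N}\mathbf{d}\cdot((\mathbf{d}\times h)\times h)dx$ and the It\^{o} correction $\tfrac12\Phi''(\mathbf{d})[\mathbf{d}\times h,\mathbf{d}\times h]=\tfrac{4N+2}{2}\int_{\mathbf{D}}|\mathbf{d}|^{4N}|\mathbf{d}\times h|^{2}dx$ cancel exactly, since the identities $(\mathbf{d}\times h)\times h=(\mathbf{d}\cdot h)h-|h|^{2}\mathbf{d}$ and $|\mathbf{d}\times h|^{2}=|h|^{2}|\mathbf{d}|^{2}-(\mathbf{d}\cdot h)^{2}$ make the two integrands opposite. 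This reflects the geometric fact that the rotational noise $(\mathbf{d}\times h)\circ\dot{W}_{2}$ preserves $|\mathbf{d}|$ pointwise and hence cannot influence any norm of the form $\int_{\mathbf{D}}\phi(|\mathbf{d}|)dx$. Finally the reaction term contributes $-(4N+2)\int_{\mathbf{D}}|\mathbf{d}|^{4N+2}\tilde{f}(|\mathbf{d}|^{2})dx$, whose leading part $-a_{N}(4N+2)\int_{\mathbf{D}}|\mathbf{d}|^{6N+2}dx$ is strongly dissipative, the lower-order monomials $a_{k}\int_{\mathbf{D}}|\mathbf{d}|^{4N+2+2k}dx$, $k<N$, being absorbed by Young's inequality.

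Putting these together yields the \emph{deterministic-looking} inequality $\frac{d}{dt}\Phi(t)\le -c_{1}\int_{\mathbf{D}}|\mathbf{d}|^{6N+2}dx+C_{2}\Phi(t)+C_{3}$ with $c_{1},C_{3}>0$ and $C_{2}\ge 0$ depending only on the coefficients, on $h$ and on $\mathbf{D}$ (one may even take $C_{2}=0$ if the exact cancellation of the two noise contributions is used, rather than bounding the It\^{o} correction crudely by $(2N+1)|h|^{2}\Phi$). On the bounded domain $\mathbf{D}$ the power-mean inequality gives $\int_{\mathbf{D}}|\mathbf{d}|^{6N+2}dx\ge c\,\Phi^{1+\varepsilon}$ with $\varepsilon=\tfrac{N}{2N+1}>0$, whence $\frac{d}{dt}\Phi\le -c_{1}'\Phi^{1+\varepsilon}+C_{2}\Phi+C_{3}$. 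Here I would invoke the logarithmic device advertised in the introduction: setting $y=\ln(1+\Phi)$ turns the superlinear dissipation into a term that dominates the remaining at-most-linear growth, so that $y$, and therefore $\Phi$, is trapped below a threshold $R$ determined solely by $c_{1}',C_{2},C_{3}$. Consequently $\Phi(t)\le\max\{\Phi(t_{0}),R\}$ for every $t\ge t_{0}$, which is exactly the asserted bound, uniform in both $t$ and $t_{0}$ whenever the initial datum is bounded; in fact $R$ serves as an absorbing bound once $t-t_{0}$ exceeds a computable time.

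I expect the main obstacle to be the rigorous justification of the It\^{o} formula for the non-smooth $L^{4N+2}$-norm functional in infinite dimensions, since $\mathbf{d}\mapsto\int_{\mathbf{D}}|\mathbf{d}|^{4N+2}dx$ is not twice Fr\'echet differentiable in a form directly admissible to the infinite-dimensional It\^{o} formula. I would deal with this by running the computation on Galerkin (or Yosida) approximations, for which every term is classical, checking that the cancellations above survive the approximation, and then passing to the limit using the non-negativity of the dissipative contributions together with the a.e.\ regularity $\mathbf{d}(t)\in L^{6N+2}(\mathbf{D})$. The exact cancellation of the two noise terms is what makes the limiting inequality free of any stochastic integral, and hence amenable to the pathwise comparison argument described above.
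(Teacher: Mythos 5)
Your proposal is correct and follows essentially the same route as the paper: It\^{o}'s formula for $|\mathbf{d}|_{4N+2}^{4N+2}$, vanishing of the transport term and the stochastic integral by $\mathbf{d}\perp(\mathbf{d}\times h)$, exact cancellation of the Stratonovich drift $\tfrac12\langle|\mathbf{d}|^{4N}\mathbf{d},(\mathbf{d}\times h)\times h\rangle$ against the It\^{o} correction, dissipation of order $|\mathbf{d}|_{6N+2}^{6N+2}\geq c\,\Phi^{(3N+1)/(2N+1)}$ from the leading coefficient $a_{N}>0$ after Young's inequality, and then the logarithmic substitution $y=\ln(1+\Phi)$ reducing the superlinear dissipation to linear damping, exactly as in the paper's $(3.21)$--$(3.27)$. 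The only differences are cosmetic: the paper closes with a linear Gronwall bound $y(t)\leq y(t_{0})e^{-c(t-t_{0})}+\int_{t_{0}}^{t}e^{-c(t-s)}ds$ rather than your comparison threshold $\max\{\Phi(t_{0}),R\}$, and your Galerkin justification of the $L^{4N+2}$ It\^{o} formula is a rigor point the paper leaves implicit.
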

\begin{Rem}
The estimates of this lemma improved bounds for the solutions to $(1.1)-(1.3).$ These bounds are uniform with respect to time $t$ and  initial time $t_{0}.$ These estimates of the uniform boundedness improve bounds obtain in Proposition C.1 of $\cite{BHR1}$ and bounds in Proposition 5.4 of  $\cite{BMAA}.$ These uniform bounds also allow us to obtain the absorbing balls for the solution in various function spaces(see $(3.36)$). The radii of these absorbing balls are independent of the initial data. This opens the way for finding the random attractor in the weak solution space $\mathbf{H}\times \mathbb{H}^{1}$. Maybe
this lemma is a basic result to study the long-time behavior of stochastic nematic liquid crystals i.e. the existence of random attractor and ergodicity for this stochastic system.
\end{Rem}
\begin{proof}
Since $ |\mathbf{d}|_{4N+2}^{4N+2}$ is twice Fr\'{e}chet differentiable for $\mathbf{d}\in \mathbb{H}^{2}$, then  the first and second derivatives of $ |\mathbf{d}|_{4N+2}^{4N+2}$ are given by
\begin{eqnarray*}
(|\mathbf{d}|_{4N+2}^{4N+2})'[d_{1}]&=&(4N+2)\langle |\mathbf{d}|^{4N}\mathbf{d}, d_{1}\rangle,\\
(|\mathbf{d}|_{4N+2}^{4N+2})''[d_{1},d_{2}]&=&(4N+2)\langle |\mathbf{d}|^{4N}d_{1},  d_{2}\rangle+(4N+2)4N\langle |\mathbf{d}|^{2N-1} \mathbf{d}, d_{1}\rangle \langle |\mathbf{d}|^{2N-1} \mathbf{d}, d_{2}\rangle,
\end{eqnarray*}
where $d_{1},d_{2}\in\mathbb{ H}$. Hence
\begin{eqnarray*}
(|\mathbf{d}|_{4N+2}^{4N+2})'[\mathbf{d}\times h]=0 \ \ \ \mathrm{and}\ \ \  (|\mathbf{d}|_{4N+2}^{4N+2})'[(\mathbf{d}\times h) dW_{2}]=0.
\end{eqnarray*}
By elementary calculus, we have
\begin{eqnarray*}
\langle |\mathbf{d}|^{4N}\mathbf{d},  \mathbf{d}\times h\times h   \rangle=-\langle |\mathbf{d}|^{4N} \mathbf{d}\times h, \mathbf{d}\times h\rangle.
\end{eqnarray*}
Then, applying It\^{o} formula to $|\mathbf{d}|_{4N+2}^{4N+2}$ we have
\begin{eqnarray*}
|\mathbf{d}(t)|_{4N+2}^{4N+2}&=&|\mathbf{d}(t_{0})|_{4N+2}^{4N+2}-(4N+2)\int_{t_{0}}^{t}\langle |\mathbf{d}|^{4N}\mathbf{d}, -\Delta \mathbf{d}+(\mathbf{u+z})\cdot \nabla \mathbf{d}+f(\mathbf{d}) ds   \rangle\nonumber\\
&&+\frac{1}{2}\int_{t_{0}}^{t}(4N+2)\langle |\mathbf{d}|^{4N}\mathbf{d},  d\times h\times h   \rangle ds\nonumber\\
&&+\frac{1}{2}\int_{t_{0}}^{t}(4N+2)\langle |\mathbf{d}|^{4N} \mathbf{d}\times h, \mathbf{d}\times h\rangle ds\nonumber\\
&&+\frac{1}{2}\int_{t_{0}}^{t}(4N+2)4N\langle |\mathbf{d}|^{2N-1} \mathbf{d} , \mathbf{d}\times h\rangle ^{2}ds\nonumber\\
&=& |\mathbf{d}(t_{0})|_{4N+2}^{4N+2}-(4N+2)(4N+1)\int_{t_{0}}^{t}\int_{\mathbf{D}}|\mathbf{d}|^{4N}|\nabla \mathbf{d}|^{2}dxds\nonumber\\
&&-(4N+2)\int_{t_{0}}^{t}\langle |\mathbf{d}|^{4N}\mathbf{d}, f( \mathbf{d})    \rangle,
\end{eqnarray*}
which implies that
\begin{eqnarray}
d |\mathbf{d}(t)|_{4N+2}^{4N+2}+(4N+2)(4N+1)\int_{\mathbf{D}}|\mathbf{d}|^{4N}|\nabla \mathbf{d}|^{2}dx+(4N+2)\langle |\mathbf{d}|^{4N}\mathbf{d}, f( \mathbf{d})    \rangle=0.
\end{eqnarray}
By Young's inequality, for small positive constant $\varepsilon$ there exists a positive constant $c$ such that
\begin{eqnarray*}
|a_{k}||\mathbf{d}|^{2k+4N+2}\leq \frac{\varepsilon}{N}|\mathbf{d}|^{6N+2}+\frac{c}{N}|\mathbf{d}|^{4N+2},\ \ k=0, 1,\cdots, N-1.
\end{eqnarray*}
Therefore,
\begin{eqnarray}
\langle |\mathbf{d}|^{4N}\mathbf{d},  f(\mathbf{d})    \rangle&=& \sum_{k=0}^{N}a_{k}|\mathbf{d}|^{2k+4N+2}\nonumber\\
&\geq&-\varepsilon|\mathbf{d}|^{6N+2}-c|\mathbf{d}|^{4N+2}+a_{N}|\mathbf{d}|^{6N+2}.
\end{eqnarray}
Combing $(3.21)$ and $(3.22)$ yields
\begin{eqnarray}
d |\mathbf{d}(t)|_{4N+2}^{4N+2}+c|\mathbf{d}(t)|_{6N+2}^{6N+2}dt\leq c|\mathbf{d}(t)|_{4N+2}^{4N+2}dt,
\end{eqnarray}
which implies
\begin{eqnarray*}
d (|\mathbf{d}(t)|_{4N+2}^{4N+2}+1)+c(|\mathbf{d}(t)|_{6N+2}^{6N+2}+1)dt\leq c(|\mathbf{d}(t)|_{4N+2}^{4N+2}+1)dt,
\end{eqnarray*}
Diving $(|\mathbf{d}(t)|_{4N+2}^{4N+2}+1) $ on both sides yields,
\begin{eqnarray}
\frac{d}{dt}\ln(|\mathbf{d}(t)|_{4N+2}^{4N+2}+1 )+c(|\mathbf{d}(t)|_{4N+2}^{4N+2}dt+1)^{\frac{3N+1}{2N+1}}\leq c.
\end{eqnarray}
Since
\begin{eqnarray}
\ln(1+|x|)\leq 1+|x|\leq (1+|x| )^{\frac{3N+1}{2N+1}},\ \ \mathrm{for}\ \mathrm{all}\ x\in \mathbb{R},
\end{eqnarray}
By $(3.24)$ we have
\begin{eqnarray}
\frac{d}{dt}\ln(|\mathbf{d}(t)|_{4N+2}^{4N+2}+1 )+c\ln(|\mathbf{d}(t)|_{4N+2}^{4N+2}+1 )\leq c.
\end{eqnarray}
Let $y(t)=\ln(|\mathbf{d}(t)|_{4N+2}^{4N+2}+1 )$. Then multiplying $e^{ct}$ on both sides yields
\begin{eqnarray*}
\frac{d}{dt}(y(t)e^{ct})\leq ce^{ct},
\end{eqnarray*}
which implies
\begin{eqnarray}
y(t)\leq y(t_{0})e^{-c(t-t_{0})}+\int_{t_{0}}^{t}e^{-c(t-s)}ds.
\end{eqnarray}
By $(3.27)$, $y(t)$ in uniformly bounded with respect to time $t$ and initial time $t_{0}$ provided the initial data $y(t_{0})$ is bounded. Therefore, this in turn implies the uniform  boundedness of $\mathbf{d}$ in $(L^{4N+2}(\mathbf{D}))^{3}$ with respect to initial time $t_{0}$.
Furthermore, if we denote $(\mathbf{u}(t,\omega; t_{0},\mathbf{u}_{0} ), \mathbf{d}(t,\omega; t_{0},\mathbf{d}_{0})) $ the solution to $(3.16)-(3.20),$ with $\mathbf{u}(t_{0})=\mathbf{u}_{0} $ and $\mathbf{d}(t_{0})=\mathbf{d}_{0} $, we infer that
\begin{eqnarray}
|f(\mathbf{d}(t,\omega; t_{0},\mathbf{d}_{0}))|_{2}^{2}\ \mathrm{and} \int_{D}\tilde {F}(|\mathbf{d}(t,\omega; t_{0},\mathbf{d}_{0})|^{2})d x\ \mathrm{are}\ \mathrm{uniformly}\ \mathrm{bounded}\ \mathrm{w.}\ \mathrm{r.}\ \mathrm{t.}\ t\ \mathrm{and}\  t_{0}.
\end{eqnarray}
 \hspace{\fill}$\square$
\end{proof}
\noindent ${3.2.\ \mathbf{Absorbing}\ \mathbf{ball}\ \mathbf{of}\  (\mathbf{u}, \mathbf{d})\ \mathrm{in}\ \mathbf{H}\times \mathbb{H}^{1}. } $

\begin{proposition}
There exists an  absorbing ball for the weak solution $(\mathbf{v}, \mathbf{d})$ to $(1.1)-(1.5)$ at any time $t(\in \mathbb{R})$ in $\mathbf{H}\times \mathbb{H}^{1}.$
\end{proposition}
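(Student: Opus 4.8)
The plan is to work with the transformed system $(3.16)$--$(3.20)$, for which the additive noise has been absorbed into the pathwise-smooth process $\mathbf{z}$, and to run a pathwise energy estimate on the natural Lyapunov functional
\[
\mathcal{E}(t)=\tfrac{1}{2}|\mathbf{u}(t)|_{2}^{2}+\tfrac{1}{2}|\nabla\mathbf{d}(t)|_{2}^{2}+\int_{\mathbf{D}}F(\mathbf{d}(t))\,dx,
\]
whose last two pieces measure $\mathbf{d}$ in $\mathbb{H}^{1}$ (recall that $|\mathbf{d}|_{2}^{2}$ and $\int_{\mathbf{D}}F(\mathbf{d})\,dx$ are already uniformly bounded by Lemma 3.1 and $(3.28)$). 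First I would test $(3.16)$ against $\mathbf{u}$, so that the pressure and the self-convection $\langle(\mathbf{u}\cdot\nabla)\mathbf{u},\mathbf{u}\rangle$ drop out, and apply the It\^o/Stratonovich chain rule to the $\mathbf{d}$-part of $\mathcal{E}$ using $(3.18)$, which amounts to testing the orientation equation against $-\Delta\mathbf{d}+f(\mathbf{d})$.

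The decisive simplification comes from two cancellations. For the orientation field, since $(\mathbf{d}\times h)\cdot\mathbf{d}=0$ and $(\partial_{i}\mathbf{d}\times h)\cdot\partial_{i}\mathbf{d}=0$ pointwise, one gets $\langle -\Delta\mathbf{d}+f(\mathbf{d}),\mathbf{d}\times h\rangle=0$; hence the martingale term \emph{and} all It\^o corrections generated by the multiplicative noise vanish identically (equivalently, the Stratonovich noise contribution to $d\mathcal{E}$ is zero), and the standing assumption $|h|\ll1$ ensures no residual noise term survives. For the coupling I would use the classical identity, valid for divergence-free $\mathbf{u}$,
\[
\langle\nabla\cdot(\nabla\mathbf{d}\odot\nabla\mathbf{d}),\mathbf{u}\rangle=\langle\Delta\mathbf{d},(\mathbf{u}\cdot\nabla)\mathbf{d}\rangle,\qquad \langle f(\mathbf{d}),(\mathbf{u}\cdot\nabla)\mathbf{d}\rangle=\int_{\mathbf{D}}(\mathbf{u}\cdot\nabla)F(\mathbf{d})\,dx=0,
\]
so the stress term in the velocity equation cancels exactly against the $\mathbf{u}$-transport term $\langle -\Delta\mathbf{d}+f(\mathbf{d}),(\mathbf{u}\cdot\nabla)\mathbf{d}\rangle$ arising from $(3.18)$. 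After these cancellations only the dissipation $-|\nabla\mathbf{u}|_{2}^{2}-|\Delta\mathbf{d}-f(\mathbf{d})|_{2}^{2}$ and a collection of remainder terms linear in $\mathbf{z}$ survive, namely $\langle(\mathbf{u}\cdot\nabla)\mathbf{z},\mathbf{u}\rangle$, $\langle(\mathbf{z}\cdot\nabla)\mathbf{z},\mathbf{u}\rangle$, $\beta\langle\mathbf{z},\mathbf{u}\rangle$ and the $\mathbf{z}$-transport $\langle-\Delta\mathbf{d}+f(\mathbf{d}),(\mathbf{z}\cdot\nabla)\mathbf{d}\rangle$.

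Next I would estimate these remainders using the regularity $\mathbf{z}(\omega)\in C([t_{0},\infty);\mathbf{H}^{2})$ from $(2.15)$, the Ladyzhenskaya inequality $|\mathbf{u}|_{4}^{2}\le C|\mathbf{u}|_{2}|\nabla\mathbf{u}|_{2}$, the elliptic bound $|\nabla\mathbf{d}|_{2}^{2}\le\varepsilon|\Delta\mathbf{d}|_{2}^{2}+C|\mathbf{d}|_{2}^{2}$, and the uniform bounds on $|f(\mathbf{d})|_{2}$ and $\int_{\mathbf{D}}F(\mathbf{d})\,dx$ from Lemma 3.1; Young's inequality then lets me absorb the top-order parts into $|\nabla\mathbf{u}|_{2}^{2}$ and $|\Delta\mathbf{d}-f(\mathbf{d})|_{2}^{2}$. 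Using the Poincar\'e inequality this produces a differential inequality of the form
\[
\frac{d}{dt}\mathcal{E}(t)\le -\kappa(t,\omega)\,\mathcal{E}(t)+\rho(t,\omega),
\]
where the rate $\kappa(t,\omega)=c_{0}-C|\nabla\mathbf{z}(t)|_{2}^{2}$ and the forcing $\rho(t,\omega)$ are stationary processes built from $\mathbf{z}$.

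The main obstacle — and the point where the stochastic construction genuinely departs from the deterministic one — is that $\kappa$ is not a positive constant but a stationary, possibly sign-changing process, so the Gronwall step must be carried out with a random rate. I would resolve this by choosing the parameter $\beta$ in the definition of $\mathbf{z}$ large enough that $C\,\mathbb{E}|\nabla\mathbf{z}|_{2}^{2}$ is small, whence $\mathbb{E}\kappa>0$; the ergodic theorem then yields $\int_{t_{0}}^{0}\kappa(s,\omega)\,ds\to+\infty$ as $t_{0}\to-\infty$, while the sublinear growth of $\mathbf{z}(s,\omega)$ controls $\int_{t_{0}}^{0}\exp\!\big(-\!\int_{s}^{0}\kappa\big)\rho(s)\,ds$. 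Applying this random Gronwall inequality on $[t_{0},0]$ and letting $t_{0}\to-\infty$ bounds $\mathcal{E}(0)$ by a finite random radius independent of the initial data $(\mathbf{v}_{t_{0}},\mathbf{d}_{t_{0}})$; translating back through $\mathbf{v}=\mathbf{u}+\mathbf{z}$ and using $\mathbf{z}\in\mathbf{H}^{2}$ gives an absorbing ball for $(\mathbf{v},\mathbf{d})$ in $\mathbf{H}\times\mathbb{H}^{1}$ at time $0$, and hence, by the cocycle identity $(c)$, at every time $t$.
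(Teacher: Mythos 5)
Your proposal is correct and takes essentially the same route as the paper: the same change of variables $\mathbf{v}=\mathbf{u}+\mathbf{z}$, the same Lyapunov functional $|\mathbf{u}|_{2}^{2}+|\nabla\mathbf{d}|_{2}^{2}+\int_{\mathbf{D}}\tilde{F}(|\mathbf{d}|^{2})\,dx$ with the stress/transport cancellation and the vanishing martingale term, the uniform bounds on $f(\mathbf{d})$ and $\tilde{F}$ from Lemma 3.1 (i.e.\ $(3.28)$, $(3.33)$), and the random-Gronwall/ergodicity step with $\beta$ large, exactly as in $(3.36)$--$(3.37)$. The one harmless deviation is that you invoke the exact Stratonovich cancellation of the noise contribution to $d\mathcal{E}$ (which is indeed true: the It\^{o} correction $\frac{1}{2}\langle\Delta\mathbf{d}-f(\mathbf{d}),(\mathbf{d}\times h)\times h\rangle$ cancels in aggregate against $\frac{1}{2}|\nabla\mathbf{d}\times h|_{2}^{2}+\frac{1}{2}\int_{\mathbf{D}}\tilde{f}(|\mathbf{d}|^{2})|\mathbf{d}\times h|^{2}dx$, so $|h|\ll 1$ is not even needed for this term), whereas the paper keeps these correction terms in $(3.31)$ and estimates them individually via Lemma 3.1 and the smallness of $h$ — both routes are valid.
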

\begin{proof}
The first order and second order Fr\'{e}chet derivative of $|\nabla \mathbf{d}|_{2}^{2}+\int_{D}F(|d(x)|^{2})dx$ are given by
\begin{eqnarray}
(|\nabla \mathbf{d}|_{2}^{2}+\int_{D}\tilde{F}(|\mathbf{d}(x)|^{2})dx )' [\xi] =  2\langle \nabla \mathbf{d},  \nabla\xi     \rangle+2\langle f(\mathbf{d}), \xi    \rangle
\end{eqnarray}
and
\begin{eqnarray}
&&(|\nabla \mathbf{d}|_{2}^{2}+\int_{D}\tilde {F}(|d(x)|^{2})dx )'' [\xi,\eta] \nonumber\\
&&=  2\langle \nabla \xi, \nabla \eta     \rangle+2\int_{\mathbf{D}} \tilde{f}(|\mathbf{d}|^{2}) (\xi\cdot\eta)    dx+4\int_{\mathbf{D}}\tilde{f}'(\mathbf{d}) (\mathbf{d}\cdot\xi)  (\mathbf{d}\cdot\eta) dx.
\end{eqnarray}

Since $\langle  \Delta \mathbf{d}-f(\mathbf{d}),   \mathbf{d } \times h \rangle dW_{2}=0 $ and $\mathbf{d}\perp \mathbf{d}\times h$,
applying It\^{o} formula to $|\nabla \mathbf{d}|_{2}^{2} +\int_{\mathbf{D}} \tilde{F}(|\mathbf{d}(x)|^{2})dx, \mathbf{d}\in \mathbb{H}^{1},$ yields
\begin{eqnarray}
&&\frac{1}{2}\frac{d(|\nabla \mathbf{d}|_{2}^{2}+\int_{\mathbf{D}}\tilde{F}(|\mathbf{d}|^{2})dx  )}{dt}+|\Delta \mathbf{d}-f(\mathbf{d})|_{2}^{2}\nonumber\\
&=&\langle (\mathbf{u}+\mathbf{z})\cdot \nabla \mathbf{d}, \Delta \mathbf{d} -f(\mathbf{d}) \rangle-\frac{1}{2}\langle \mathbf{d}\times h\times h, \Delta \mathbf{d}-f(\mathbf{d})    \rangle\nonumber\\
&&+\frac{1}{2}|\nabla \mathbf{d}\times h|_{2}^{2}+\int_{\mathbf{D}}\tilde{f}'(\mathbf{d})|\mathbf{d} \cdot (\mathbf{d}\times h) |^{2}dx\nonumber\\
&&+\frac{1}{2}\int_{\mathbf{D}}\tilde{f}(|\mathbf{d}|^{2})|\mathbf{d}\times h|^{2}dx+\langle  \Delta \mathbf{d}-f(\mathbf{d}), \mathbf{d}\times h     \rangle dW_{2}\nonumber\\
&=&\langle (\mathbf{u}+\mathbf{z})\cdot \nabla \mathbf{d}, \Delta \mathbf{d}  \rangle-\frac{1}{2}\langle \mathbf{d}\times h\times h, \Delta \mathbf{d}-f(\mathbf{d})    \rangle\nonumber\\
&&+\frac{1}{2}|\nabla \mathbf{d}\times h|_{2}^{2}+\frac{1}{2}\int_{\mathbf{D}}\tilde{f}(|\mathbf{d}|^{2})|\mathbf{d}\times h|^{2}dx.
\end{eqnarray}
Taking inner product of $(3.16)$ with $\mathbf{u}$ in $\mathbf{H}$ yields,
\begin{eqnarray}
\frac{1}{2}\frac{d|\mathbf{u}|_{2}^{2}}{dt}+|\nabla \mathbf{u}|_{2}^{2}=\langle \mathbf{u}\cdot \nabla \mathbf{z}+ \mathbf{z}\cdot \nabla \mathbf{z}, \mathbf{u }\rangle -
\langle \nabla\cdot ( \nabla \mathbf{d} \odot \nabla \mathbf{d} ) ,     \mathbf{u}\rangle .
\end{eqnarray}
Since  by integration by parts and boundary condition $(3.16)$ ,
\begin{eqnarray*}
\langle \mathbf{u}\cdot \nabla \mathbf{d },    \Delta \mathbf{d}    \rangle &=& \int_{\mathbf{D}}\mathbf{u}^{i}\partial_{x_{i}}\mathbf{d}^{k}\partial_{x_{j}x_{j}}\mathbf{d}^{k}\\
&=&-\int_{\mathbf{D}}\partial_{x_{j}}\mathbf{u}^{i}\partial_{x_{i}}\mathbf{d}^{k}\partial_{x_{j}}\mathbf{d}^{k}d\mathbf{D}
-\int_{\mathbf{D}}\mathbf{u}^{i}\partial_{x_{i}x_{j}}\mathbf{d}^{k}\partial_{x_{j}}\mathbf{d}^{k}d\mathbf{D}\\
&=&-\int_{\mathbf{D}}\partial_{x_{j}}\mathbf{u}^{i}\partial_{x_{i}}\mathbf{d}^{k}\partial_{x_{j}}\mathbf{d}^{k}d\mathbf{D},
\end{eqnarray*}
and
\begin{eqnarray*}
-
\langle \nabla\cdot ( \nabla \mathbf{d} \odot \nabla \mathbf{d} ) ,     \mathbf{u}\rangle &=&-\int_{\mathbf{D}}\partial_{x_{i}}(\partial_{x_{i}} \mathbf{d}^{k}\partial_{x_{j}}\mathbf{d}^{k} )\mathbf{u}^{j}dD\\
&=&\int_{\mathbf{D}}\partial_{x_{i}} \mathbf{d}^{k}\partial_{x_{j}}\mathbf{d}^{k} \partial_{x_{i}}\mathbf{u}^{j}dD,
\end{eqnarray*}
combining  $(3.28)$ and $(3.29)$ together yields
\begin{eqnarray}
&&\frac{1}{2}\frac{d(|\mathbf{u}|_{2}^{2} +|\nabla \mathbf{d}|_{2}^{2}+\int_{\mathbf{D}}\tilde{F}(|\mathbf{d}|^{2})dx )}{dt}+|\nabla \mathbf{u}|_{2}^{2}+|\Delta \mathbf{d}-f(\mathbf{d})|_{2}^{2}\nonumber\\
&=&\langle \mathbf{u}\cdot \nabla \mathbf{z}+ \mathbf{z}\cdot \nabla \mathbf{z}, \mathbf{u }\rangle-\frac{1}{2} \langle  \mathbf{d}\times h\times h,     \Delta \mathbf{d} -f(\mathbf{d})  \rangle\nonumber\\
&&+\frac{1}{2}|\nabla \mathbf{d}\times h|_{2}^{2}+\frac{1}{2}\int_{\mathbf{D}}\tilde{f}(|\mathbf{d}|^{2})|\mathbf{d}\times h|^{2}dx+\langle \mathbf{z}\cdot \nabla \mathbf{d}, \Delta \mathbf{d}   \rangle\nonumber\\
&\leq & c|\nabla \mathbf{z}|_{\infty}|\mathbf{u}|_{2}^{2}+c|\mathbf{z}|_{\infty}^{2}|\mathbf{u}|_{2}^{2}+c|\nabla \mathbf{z}|_{2}^{2}+\varepsilon |\Delta \mathbf{d}-f(\mathbf{d})|_{2}^{2}+c|\mathbf{d}|_{2}^{2}\nonumber\\
&&+\frac{1}{2}|\nabla \mathbf{d}\times h|_{2}^{2}+\frac{1}{2}\int_{\mathbf{D}}\tilde{f}(|\mathbf{d}|^{2})|\mathbf{d}\times h|^{2}dx+\varepsilon|\Delta \mathbf{d}|_{2}^{2}+c|\mathbf{z}|_{\infty}^{2}|\nabla \mathbf{d}|_{2}^{2}.
\end{eqnarray}
By $(1.12)$ and $(3.28),$ we have
\begin{eqnarray}
 |\int_{\mathbf{D}}\tilde{f}(|\mathbf{d}|^{2})|\mathbf{d}\times h|^{2}dx|\leq c\ \   \mathrm{and}\ \  |\int_{\mathbf{D}}\tilde{F}(|\mathbf{d}|^{2})dx|\leq c
\end{eqnarray}
 for all time $t \in \mathbb{R}$ and initial time $ t_{0}(\leq t).$
Since by the  Poincar\'{e} inequality and the Minkovski inequality
\begin{eqnarray}
c |\nabla \mathbf{d}|_{2}^{2}\leq\frac{1}{2}|\Delta \mathbf{d}|_{2}^{2}\leq |\Delta \mathbf{d}- f(\mathbf{d})|_{2}^{2} + |f(\mathbf{d})|_{2}^{2},
\end{eqnarray}
then in view of the  Poincar\'{e} inequality, the H\"{o}lder inequality, the Sobolev imbedding theorem and $(3.33)-(3.35)$,  we have
\begin{eqnarray}
&&\frac{1}{2}\frac{d(| \mathbf{u}|_{2}^{2} +|\nabla \mathbf{d}|_{2}^{2}+\int_{\mathbf{D}}\tilde{F}(|\mathbf{d}|^{2})dx )}{dt}+c| \mathbf{u}|_{2}^{2}+c|\Delta \mathbf{d}|_{2}^{2}+c\int_{\mathbf{D}}\tilde{F}(|\mathbf{d}|^{2})dx \nonumber\\
&\leq&c |f(\mathbf{d})|_{2}^{2}+c\int_{\mathbf{D}}\tilde{F}(|\mathbf{d}|^{2})dx+\varepsilon |\Delta \mathbf{d}|_{2}^{2}+c|\mathbf{z}|_{\infty}^{2}|\nabla \mathbf{d}|_{2}^{2}\nonumber\\
&&+c+|\nabla \mathbf{z}|_{\infty}|\mathbf{u}|_{2}^{2}+|\mathbf{z}|_{\infty}^{2}|\mathbf{u}|_{2}^{2}+|\nabla \mathbf{z}|_{2}^{2}\nonumber\\
&\leq&c (\|\mathbf{z}\|_{1}+ \|\mathbf{z}\|_{1}^{2} )(|\mathbf{u}|_{2}^{2}+|\nabla \mathbf{d}|_{2}^{2} )+c+c\|\mathbf{z}\|_{1}^{2}.
\end{eqnarray}

\par
Let $g(t)=|\mathbf{u}(t)|_{2}^{2}+|\nabla \mathbf{d}(t)|_{2}^{2}+\int_{\mathbf{D}}\tilde{F}(|\mathbf{d}(t)|^{2})dx,$ with $t\in \mathbb{R}$. Then by $(3.33)$ we have
\begin{eqnarray*}
\frac{d}{dt}g(t)+c(1-\|\mathbf{z}\|_{1}^{2})g(t)\leq c+c\|\mathbf{z}\|_{1}^{2}.
\end{eqnarray*}
Therefore,
\begin{eqnarray*}
\frac{d\Big{(}g(t)e^{\int_{t_{0}}^{t} c (1- \|\mathbf{z}(s)\|_{1}^{2})ds }\Big{)}}{dt}
\leq c(1+\|\mathbf{z}\|_{1}^{2})e^{\int_{t_{0}}^{t}  c (1-\|\mathbf{z}(s)\|_{1}^{2})},
\end{eqnarray*}
which implies
\begin{eqnarray}
g(t)\leq g(t_{0})e^{-\int_{t_{0}}^{t}  c (1-\|\mathbf{z}(s)\|_{1}^{2}) ds}+c\int_{t_{0}}^{t}(\|\mathbf{z}\|_{1}^{2}+1)e^{-\int_{s}^{t}  c(1-\|\mathbf{z}(u)\|_{1}^{2})du }ds.
\end{eqnarray}
 Since, the process $\mathbf{z}(t)$ is stationary and ergodic and $\|\mathbf{z}(t)\|_{1}$ has polynomial growth when $t\rightarrow -\infty$, following the classic arguments (see \cite{CDF,DZ} and other references),
$(3.37)$ gives us the desired uniform estimate which yields an absorbing ball for $(\mathbf{u}, \mathbf{d})$ in $\mathbf{H}\times \mathbb{H}^{1}$.
Following the standard arguments, we can also show that for any constant $r$ and $t$ there exists $\int_{t-r}^{t}(|\nabla \mathbf{u}(s)|_{2}^{2}+|\Delta \mathbf{d}(s)|_{2}^{2})ds $ is uniformly bounded with respect to initial time $t_{0}\leq t-r.$ \hspace{\fill}$\square$
\end{proof}
\begin{Rem}
As we see from $(3.36)$ that the uniform bounds for $\tilde{F}(\mathbf{d})$ obtained in Lemma 3.1 play a vital role to obtain the absorbing ball for $(\mathbf{v}, \mathbf{d})$ in $\mathbf{H}\times \mathbb{H}^{1}.$
\end{Rem}
\noindent ${3.3.\ \mathbf{The}\ \mathbf{solution}\     \mathbf{(v, d)}\  \mathbf{to}\ (1.1)-(1.5)\ \mathbf{is}\ \mathbf{indeed}\  \mathbf{a}\ \mathbf{stochastic}\ \mathbf{flow}. } $
\begin{proposition}
Let $(\mathbf{v}_{0}, \mathbf{d}_{0}) \in \mathbf{H}\times \mathbb{H}^{1}$ . Assume $(2.14)$ hold. If $h=(h_{1}, h_{2}, h_{3}), t\in \mathbb{R},$ each $ h_{i}$ is a non zero constant, $i=1,2,3,$ then the weak solution $(\mathbf{v}, \mathbf{d})$ to $(1)-(5)$ is a stochastic flow.
\end{proposition}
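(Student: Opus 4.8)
The plan is to establish the two defining relations of a stochastic flow for the solution map $S(t,s;\omega)(\mathbf{v}_s,\mathbf{d}_s)=(\mathbf{v}(t),\mathbf{d}(t))$: the two-parameter evolution identity (i) and the cocycle identity (c), $S(t,s;\omega)=S(t-s,0;\vartheta_s\omega)$. The measurability (iii), the continuity in the state variable (ii) and the right-continuity in $s$ (iv) are inherited from Theorem 2.1, since the unique weak solution is Lipschitz continuous with respect to the initial data in $\mathbf{H}\times\mathbb{H}^1$ and continuous in time, while the evolution identity (i) follows from uniqueness. Thus the whole difficulty is the cocycle identity, and since the equations for $\mathbf{v}$ and $\mathbf{d}$ are driven by the independent noises $W_1$ and $W_2$, I would treat them separately. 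For $\mathbf{v}$ I would use the Ornstein--Uhlenbeck change $\mathbf{v}=\mathbf{u}+\mathbf{z}$ already introduced in (3.16)--(3.20): the process $\mathbf{z}$ is stationary, so it satisfies the cocycle relation under $\vartheta_s$ by construction, while $\mathbf{u}$ solves a pathwise (noise-free) random PDE whose unique solvability and continuous dependence make it a flow; hence so is $\mathbf{v}$. The genuinely new point, which is the content of the proposition, is the orientation field $\mathbf{d}$.

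For $\mathbf{d}$, I would exploit the rotational structure of the Stratonovich noise. Writing $B\mathbf{d}:=\mathbf{d}\times h$, the operator $B$ is a constant skew-symmetric $3\times3$ matrix with eigenvalues $0$ and $\pm i|h|$, and the flow it generates, $R(t)=e^{BW_2(t)}$, is a rotation about the axis $h$; in particular it preserves $|\mathbf{d}|$. This is the key algebraic fact, because the penalized nonlinearity $f(\mathbf{d})=\tilde f(|\mathbf{d}|^2)\mathbf{d}$ is \emph{equivariant} under such rotations: $R^{-1}f(R\mathbf{p})=\tilde f(|\mathbf{p}|^2)\mathbf{p}=f(\mathbf{p})$. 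Following the idea announced in the introduction, I would project (1.3) onto the eigenvectors of $B$. For a constant, possibly complex, vector $\mathbf{c}$ with $B\mathbf{c}=-a\mathbf{c}$, the scalar field $\phi:=\mathbf{c}\cdot\mathbf{d}$ satisfies the linear scalar SPDE
\begin{eqnarray*}
d\phi+[(\mathbf{v}\cdot\nabla)\phi]\,dt-\gamma\Delta\phi\,dt+\gamma\tilde f(|\mathbf{d}|^2)\phi\,dt=a\,\phi\circ dW_2,\qquad \frac{\partial\phi}{\partial\mathbf{n}}=0,
\end{eqnarray*}
because $\mathbf{c}\cdot f(\mathbf{d})=\tilde f(|\mathbf{d}|^2)\phi$ and $\mathbf{c}\cdot(B\mathbf{d})=(B^{\!\top}\mathbf{c})\cdot\mathbf{d}=a\,\phi$. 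The admissible noise coefficients are exactly $a\in\{0,\pm i|h|\}$, with $\mathbf{c}=h$ for $a=0$ and a conjugate pair $\mathbf{c}_{\pm}$ spanning the plane orthogonal to $h$ for $a=\pm i|h|$.

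The flow property of each scalar equation is then transparent: since $a$ is constant, the exponential change $\psi:=e^{-aW_2}\phi$ removes the multiplicative noise and turns the equation into the pathwise linear random PDE $d\psi=[-(\mathbf{v}\cdot\nabla)\psi+\gamma\Delta\psi-\gamma\tilde f(|\mathbf{d}|^2)\psi]\,dt$, for which unique solvability and continuous dependence give the flow property; the cocycle identity transfers back through the factor $e^{aW_2}$ using $W_2(\vartheta_s\omega)(t)=W_2(s+t)-W_2(s)$, i.e. $e^{aW_2(s+t)}=e^{aW_2(s)}e^{aW_2(\vartheta_s\omega)(t)}$. Because $|h|\neq0$ the three eigenvectors $h,\mathbf{c}_{+},\mathbf{c}_{-}$ form a basis of $\mathbb{C}^3$, so $\mathbf{d}$ is recovered as a fixed linear combination of $\phi_0,\phi_{+},\phi_{-}$; hence $\mathbf{d}$ inherits the cocycle identity, and together with $\mathbf{v}$ this proves $(\mathbf{v},\mathbf{d})$ is a stochastic flow. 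Running the projection through the three eigen-directions (this is the role of the ``different $h$'' in the introduction) covers every component of $\mathbf{d}$.

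The main obstacle I expect is that the scalar equations do not genuinely decouple: the coefficient $\tilde f(|\mathbf{d}|^2)$ still carries the full field $\mathbf{d}$, which is exactly what I am trying to show is a flow. I would resolve this through the rotational invariance noted above: since each purely imaginary $a$ gives $|e^{aW_2}|=1$, the transformed quantities satisfy $\psi_j=\mathbf{c}_j\cdot\mathbf{p}$ with $\mathbf{p}=R^{-1}\mathbf{d}$, and $|\mathbf{d}|^2=|\mathbf{p}|^2$ is a constant-coefficient Hermitian form in $(\psi_0,\psi_{+},\psi_{-})$. Consequently the $\psi$-system closes into a noise-free random PDE system that is equivalent to the $\mathbf{p}$-equation and inherits the well-posedness of Theorem 2.1, so no circular dependence on $\mathbf{d}$ remains. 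Thus the crux is the algebraic observation that the cross-product noise generates norm-preserving rotations under which the nonlinearity is equivariant, which lets one strip off the multiplicative noise componentwise while keeping the transformed system closed.
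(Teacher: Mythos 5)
Your proposal is correct in substance, but it takes a genuinely different route from the paper's own proof. The paper works entirely with real projections: it sets $\alpha(t)=e^{W_{2}(t)}h$ and shows by It\^{o}'s formula that $\bar{\mathbf{d}}=\alpha\cdot\mathbf{d}$ solves a \emph{scalar} SPDE with linear Stratonovich noise $\bar{\mathbf{d}}\circ dW_{2}$ (projecting onto the rotation axis $h$ kills the cross-product noise, and the factor $e^{W_{2}}$ is inserted precisely so as to land in the class of linear-multiplicative-noise equations whose solutions are known to be flows by \cite{CF}); it then repeats the argument with sign-flipped vectors such as $\bar h=(h_{1},-h_{2},-h_{3})$, recovers each component $d_{i}$ from linear combinations such as $d_{1}=h_{1}^{-1}e^{-W_{2}(t)}(\bar{\mathbf{d}}+\mathbf{d}_{1})$, and needs a separate case analysis (Remark 3.3) when some $h_{i}=0$. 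You instead diagonalize the skew-symmetric operator $B\mathbf{d}=\mathbf{d}\times h$ over $\mathbb{C}$: your $a=0$ projection $h\cdot\mathbf{d}$ is exactly the paper's first scalar equation stripped of the $e^{W_{2}}$ factor, but your complex eigenprojections --- equivalently the conjugation $\mathbf{p}=e^{-BW_{2}}\mathbf{d}$ by the rotation group --- remove the noise entirely and yield a single pathwise random PDE. The decisive observation you make, that $R=e^{BW_{2}}$ is an isometry and $f(\mathbf{d})=\tilde f(|\mathbf{d}|^{2})\mathbf{d}$ is equivariant, so $R^{-1}f(R\mathbf{p})=f(\mathbf{p})$ and the transformed system \emph{closes}, is exactly the point the paper sidesteps: in (3.38) the coefficient $\bar f(\bar{\mathbf{d}})=\sum_{k}a_{k}|\mathbf{d}|^{2k}\alpha\cdot\mathbf{d}$ still carries the full field $\mathbf{d}$, and the paper must treat $\mathbf{u},\mathbf{z},\mathbf{d}$ there as given data. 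Notably, the introduction asserts that the method of \cite{CF} of constructing an equivalent vector-valued PDE with random coefficients cannot be followed for (1.3); your computation shows that it can, because the cross-product noise generates norm-preserving rotations. What your route buys: no noise after the transformation, a closed system, the cocycle identity directly from pathwise uniqueness together with $e^{aW_{2}(s+t)}=e^{aW_{2}(s)}e^{aW_{2}(\vartheta_{s}\omega)(t)}$, and a uniform treatment of vanishing $h_{i}$; what the paper's route buys is that it never leaves real scalar equations and can quote \cite{CF} verbatim for each one. Two details you should make explicit when writing this up: (a) $\mathbf{v}$ and $\mathbf{d}$ cannot really be treated separately as in your opening, since their equations are coupled; one must transform the system jointly, which works because the elastic stress depends on $\mathbf{d}$ only through $\partial_{x_i}\mathbf{d}\cdot\partial_{x_j}\mathbf{d}$, whence $\nabla\mathbf{d}\odot\nabla\mathbf{d}=\nabla\mathbf{p}\odot\nabla\mathbf{p}$ and the $(\mathbf{u},\mathbf{p})$ system has exactly the original form; (b) pathwise well-posedness of that transformed system does follow from Theorem 2.1, because for each $\omega$ and $t$ the map $(\mathbf{v},\mathbf{d})\mapsto(\mathbf{u},\mathbf{p})$ is an affine isometry of $\mathbf{H}\times\mathbb{H}^{1}$.
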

\begin{proof}
To show $(\mathbf{v}, \mathbf{d})$ is a stochastic flow, it is sufficient to show $ \mathbf{d}$ is a stochastic flow.

\par
Let $\alpha(t)= e^{W_{2}(t)}h, t\in \mathbb{R}$. For $a$ and $b$ in $\mathbb{R}^{3},$  define the inner product between $a$ and $b$ by $a\cdot b$.  Applying It\^{o} formula to $\alpha(t)\cdot \mathbf{d}(t)$ yields,
\begin{eqnarray*}
d(\alpha(t)\cdot \mathbf{d}(t) )&=&(d\alpha(t))\cdot \mathbf{d}(t)+\alpha(t)\cdot (d\mathbf{d}(t))\\
&=& \alpha(t)\cdot \mathbf{d}(t)\circ dW_{2}(t)-((\mathbf{u}+\mathbf{z})\cdot \nabla \mathbf{d})\cdot \alpha(t) +(\Delta \mathbf{d}-f(\mathbf{d}) )\cdot \alpha(t)\\
&&+\Big{(}\frac{1}{2}(\mathbf{d}\times h)\times h +(\mathbf{d}\times h )\circ W_{2}\Big{)} \cdot \alpha(t)\\
&=& -[(\mathbf{u}+\mathbf{z})\cdot \nabla (\alpha(t)\cdot \mathbf{d})] \cdot I+ \Delta (\alpha(t)\mathbf{d})-\bar{f}(\alpha(t)\mathbf{d} )\\
&&+(\alpha(t)\cdot \mathbf{d}(t))\circ dW_{2}(t),
\end{eqnarray*}
where $I=(1,1,1)\in \mathbb{R}^{3}$ and $\bar{f}(\alpha\cdot\mathbf{d})=\sum_{k=0}^{N}a_{k}|\mathbf{d}|^{2k}\alpha\cdot\mathbf{d}. $

Let $\alpha \cdot \mathbf{d}= \bar{\mathbf{d} }.$ Defining the weak solution to $(3.38)-(3.40)$ similarly to Definition 2.1, $ \bar{\mathbf{d} } $  is the unique weak solution to $(3.38)-(3.40)$ for given $\mathbf{u},$ $\mathbf{z}$ and $\mathbf{d}.$
\begin{eqnarray}
\bar{\mathbf{d}}_{t}+[(\mathbf{u}+\mathbf{z})\cdot \nabla\bar{\mathbf{d}}]\cdot I- (\Delta \bar{\mathbf{d}}- \bar{f}(\bar{\mathbf{d}}))&=&\bar{\mathbf{d}} \circ\dot{W}_{2},\\
\bar{\mathbf{d}}(x,t)=\mathbf{d}_0(x),\ \ (x,t)\in \Gamma &\times& [t_{0}, \infty), \\
\bar{\mathbf{d}}|_{t=t_0}=\mathbf{d}_{0}(x)\cdot h e^{W_{2}(t_{0})},\ \ x&\in& \mathbf{D}.
\end{eqnarray}
 Obviously, $ \bar{\mathbf{d} } $ is a stochastic flow (see $\cite{CF}$).

 \par
 Let $\bar{h}=(h_{1}, -h_{2}, -h_{3}).$ Define $\alpha_{1}= e^{W_{2}(t)}\bar{h}$ and $\mathbf{d}_{1}=\alpha_{1} \cdot \mathbf{d}$. Then following the above steps we will show $\mathbf{d}_{1}$ is also a stochastic flow. Since $\mathbf{d}=(d_{1}, d_{2}, d_{3})$ and $d_{1}=h_{1}^{-1}e^{-W_{2}(t)}(\mathbf{d}+\mathbf{d}_{1} )$, $d_{1}$ is a stochastic flow.

 \par
 Similarly, we can show $d_{2}$ and $d_{3}$ is a stochastic flow.
  Then the conclusions of the proposition follows.
   \hspace{\fill}$\square$
   \end{proof}
\begin{Rem}
If some $h_{i}=0$ or all $h_{i}=0, i=1,2,3.$ Then we can use Proposition 3.2 to prove $\mathbf{d}$ is a stochastic flow. For example, if $h_{1}=0.$ Then in the equation of $d_{1} $ the first component of $\mathbf{d}$, the coefficient of the noise $W_{2}$  is the first component of $\mathbf{d}\times h$ which is a linear function of $d_{2}$ and $d_{3}$. Then the noise in the equation of orientation field $d_{1}$ can be regarded as an additive noise, so $d_{1}$ is a stochastic flow.
\end{Rem}

\noindent ${3.4.\ \mathbf{Two}\ \mathbf{a}\ \mathbf{priori}\   \mathbf{estimates}\   \mathbf{for}\ \mathbf{the} \ \mathbf{Aubin-Lions}\ \mathbf{Lemma}. } $
\par
To establish that the solution operator to $(1.1)-(1.5)$ is compact in $\mathbf{H}\times \mathbb{H}^{1} $,  the first step is to use the Aubin-Lions lemma to obtain a convergent subsequence of $(\mathbf{v}, \mathbf{d})$ or equivalently a convergent subsequence of $(\mathbf{u}, \mathbf{d})$ which converges almost everywhere with respect to time $t\in [s, T], s,T\in \mathbb{R}$ and $s<T,$ in $\mathbf{H}\times \mathbb{H}^{1}$. Then we have to verify the two a priori estimates of Aubin-Lions lemma. The first one is to obtain a priori estimates about $(\mathbf{u}, A_{2}^{\frac{1}{2}}\mathbf{d})$ in $L^{2}([s,T]; \mathbf{V}\times \mathbb{H}^{1})$, which is proved in the following Proposition 3.3. The other one is to obtain a priori estimates of $(\frac{d}{dt}\mathbf{u}, \frac{d}{dt}A_{2}^{\frac{1}{2}} \mathbf{d})$ in $L^{2}([s,T]; \mathbf{H}^{-1}\times \mathbb{H}^{-1})$, which is obtained in the Proposition 3.4.

\begin{proposition}
Let  $B=\{(\mathbf{v}_{0}, \mathbf{d}_{0})\in \mathbf{H}\times \mathbb{H}^{1}| |\mathbf{v}_{0}|_{2}+\|\mathbf{d}_{0}\|_{1} \leq M   \}$ for some positive constant $M,$  then $\mathbb{P}$-a.s.
\begin{eqnarray}
\sup\limits_{( \mathbf{v}_{0}, \mathbf{d}_{0})\in B}\Big{(}\int_{0}^{T}\|\mathbf{u}(t)\|_{1}^{2}dt+ \int_{0}^{T}\|\mathbf{d}(t)\|_{2}^{2}dt\Big{)}<\infty.
\end{eqnarray}
\end{proposition}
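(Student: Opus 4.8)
The plan is to integrate in time the differential energy identity with full dissipation derived in the proof of Proposition 3.1 and to convert the resulting control of the dissipation integrals into the two stated bounds, taking care that every constant stays uniform over the ball $B$. Throughout, the argument is pathwise, i.e.\ carried out for $\tilde{\mathbb{P}}$-a.e.\ fixed $\omega$.

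First I would return to the energy identity (3.33), which with $g(t)=|\mathbf{u}(t)|_{2}^{2}+|\nabla\mathbf{d}(t)|_{2}^{2}+\int_{\mathbf{D}}\tilde{F}(|\mathbf{d}(t)|^{2})\,dx$ has the form
\[
\frac{1}{2}\frac{dg}{dt}+|\nabla\mathbf{u}|_{2}^{2}+|\Delta\mathbf{d}-f(\mathbf{d})|_{2}^{2}=\mathrm{(RHS)},
\]
and integrate it over $[0,T]$. After absorbing the small multiples $\varepsilon|\Delta\mathbf{d}-f(\mathbf{d})|_{2}^{2}$ and $\varepsilon|\Delta\mathbf{d}|_{2}^{2}$ of the dissipation appearing on the right-hand side into the left (exactly as in the passage from (3.33) to (3.36), using the Minkowski inequality (3.35) and the uniform $L^{2}$-bound (3.28) for $f(\mathbf{d})$), the right-hand side is dominated by $c(\|\mathbf{z}\|_{1}+\|\mathbf{z}\|_{1}^{2})\,g(t)+c+c\|\mathbf{z}\|_{1}^{2}$, so that
\[
\int_{0}^{T}|\nabla\mathbf{u}|_{2}^{2}\,dt+\int_{0}^{T}|\Delta\mathbf{d}-f(\mathbf{d})|_{2}^{2}\,dt\le g(0)+c\int_{0}^{T}\big(1+(\|\mathbf{z}\|_{1}+\|\mathbf{z}\|_{1}^{2})g(t)+\|\mathbf{z}\|_{1}^{2}\big)\,dt.
\]
By the Gronwall bound (3.37) with $t_{0}=0$, $\sup_{t\in[0,T]}g(t)$ is finite and bounded by a constant depending only on $g(0)$, $T$ and $\omega$; and since $\mathbf{z}(\cdot,\omega)\in C([0,T];\mathbf{H}^{2})$ by (2.15), the integrals $\int_{0}^{T}(\|\mathbf{z}\|_{1}+\|\mathbf{z}\|_{1}^{2})\,dt$ are finite. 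Hence both dissipation integrals are finite. Because $g(0)=|\mathbf{v}_{0}|_{2}^{2}+|\nabla\mathbf{d}_{0}|_{2}^{2}+\int_{\mathbf{D}}\tilde{F}(|\mathbf{d}_{0}|^{2})\,dx\le C(M)$ for every $(\mathbf{v}_{0},\mathbf{d}_{0})\in B$ — the polynomial term being controlled by $\|\mathbf{d}_{0}\|_{1}$ through the two-dimensional Sobolev embedding $\mathbb{H}^{1}\hookrightarrow(L^{p}(\mathbf{D}))^{3}$ — these bounds are uniform over $B$.

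The second step reads off the two norms. The bound on $\int_{0}^{T}|\nabla\mathbf{u}|_{2}^{2}\,dt$ together with the Poincar\'{e} inequality $\|\mathbf{u}\|_{1}\le c|\nabla\mathbf{u}|_{2}$ gives $\int_{0}^{T}\|\mathbf{u}\|_{1}^{2}\,dt<\infty$ uniformly over $B$. For the orientation field I would use the Minkowski inequality (3.35), $\tfrac12|\Delta\mathbf{d}|_{2}^{2}\le|\Delta\mathbf{d}-f(\mathbf{d})|_{2}^{2}+|f(\mathbf{d})|_{2}^{2}$, together with the time-uniform $L^{2}$-bound on $f(\mathbf{d})$ recorded in (3.28) of Lemma 3.1, to obtain $\int_{0}^{T}|\Delta\mathbf{d}|_{2}^{2}\,dt<\infty$. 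Finally, elliptic regularity for the Neumann Laplacian, $\|\mathbf{d}\|_{2}^{2}\le c(|\Delta\mathbf{d}|_{2}^{2}+|\mathbf{d}|_{2}^{2})$, combined with the uniform bound on $|\mathbf{d}|_{2}$ from Lemma 3.1, yields $\int_{0}^{T}\|\mathbf{d}\|_{2}^{2}\,dt<\infty$; taking the supremum over $(\mathbf{v}_{0},\mathbf{d}_{0})\in B$ finishes the proof.

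The step I expect to be the main obstacle is not any single estimate but the uniformity over the whole ball $B$ while integrating from the fixed time $0$: the absorbing-ball estimate of Proposition 3.1 controls $g(t)$ only for $t$ large relative to the initial time, whereas here I must invoke the explicit bound (3.37) with $t_{0}=0$ to obtain a bound on $\sup_{t\in[0,T]}g(t)$ that depends on $g(0)$ yet stays bounded as $(\mathbf{v}_{0},\mathbf{d}_{0})$ ranges over $B$. This forces the uniform control of $g(0)$, hence of the polynomial energy $\int_{\mathbf{D}}\tilde{F}(|\mathbf{d}_{0}|^{2})\,dx$, by $\|\mathbf{d}_{0}\|_{1}$ via the Sobolev embeddings, and it requires the almost-sure continuity of $\mathbf{z}$ in $\mathbf{H}^{2}$ from (2.15) so that the $\mathbf{z}$-dependent integrals on the right-hand side are finite for $\tilde{\mathbb{P}}$-a.e.\ $\omega$.
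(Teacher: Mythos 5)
Your proposal is correct and follows exactly the route the paper intends: its own proof of this proposition is the one-line remark that the bound ``follows directly from Lemma 3.1 and $(3.36)$,'' and what you have written is precisely the unpacking of that remark --- integrating the energy inequality $(3.33)$--$(3.36)$ in time, invoking the Gronwall bound $(3.37)$ with $t_{0}=0$, and using the uniform bounds on $|f(\mathbf{d})|_{2}$ and $\int_{\mathbf{D}}\tilde{F}(|\mathbf{d}|^{2})\,dx$ from $(3.28)$ together with the Minkowski/elliptic-regularity step $(3.35)$ to pass from $|\Delta\mathbf{d}-f(\mathbf{d})|_{2}$ to $\|\mathbf{d}\|_{2}$. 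Your additional observation that uniformity over $B$ requires controlling $g(0)$ (and the initial $L^{4N+2}$-norm entering Lemma 3.1) via the two-dimensional embedding $\mathbb{H}^{1}\hookrightarrow (L^{p}(\mathbf{D}))^{3}$ is a detail the paper leaves implicit, and it is handled correctly.
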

\begin{proof}
Proposition 3.3 follows directly from Lemma 3.1 and $(3.36)$. \hspace{\fill}$\square$
\end{proof}
\begin{proposition}
Let  $B=\{(\mathbf{v}_{0}, \mathbf{d}_{0})\in \mathbf{H}\times \mathbb{H}^{1}| |\mathbf{v}_{0}|_{2}+\|\mathbf{d}_{0}\|_{1} \leq M   \}$ for some positive constant $M,$  then $\mathbb{P}$-a.s.
\begin{eqnarray}
\sup\limits_{( \mathbf{v}_{0}, \mathbf{d}_{0})\in B}\Big{(}\int_{0}^{T}\|\frac{d}{dt}\mathbf{u}(t)\|_{-1}^{2}dt+ \int_{0}^{T}\|\frac{d}{dt}A_{2}^{\frac{1}{2}}\mathbf{d}(t)\|_{-1}^{2}dt\Big{)}<\infty.
\end{eqnarray}
\end{proposition}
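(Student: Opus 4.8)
The plan is to read the two time derivatives off the governing evolution equations and to estimate every resulting term in the relevant dual norm, using the uniform sup-bounds of Proposition 3.1 together with the time-integrated bounds $\int_{0}^{T}(\|\mathbf{u}\|_{1}^{2}+\|\mathbf{d}\|_{2}^{2})dt<\infty$ of Proposition 3.3. The velocity part is immediate, since the Ornstein--Uhlenbeck substitution $\mathbf{v}=\mathbf{u}+\mathbf{z}$ has already absorbed the additive noise, so $(3.16)$ is a genuine pathwise random evolution equation and $\frac{d}{dt}\mathbf{u}$ is an honest $\mathbf{V}'$-valued function. The orientation part is the delicate one, because the multiplicative noise in $(3.18)$ survives the substitution; I would first remove it by a rotation conjugation.

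For $\mathbf{u}$, applying the Leray projection $P$ to $(3.16)$ annihilates $\nabla p$ and gives, in $\mathbf{V}'$, the identity $\frac{d}{dt}\mathbf{u}=-A_{1}\mathbf{u}-P[(\mathbf{u}+\mathbf{z})\cdot\nabla(\mathbf{u}+\mathbf{z})]-P[\nabla\cdot(\nabla\mathbf{d}\odot\nabla\mathbf{d})]+\beta\mathbf{z}$, and I would bound the four terms separately. The Stokes term obeys $\|A_{1}\mathbf{u}\|_{-1}=\|\mathbf{u}\|_{1}$, square-integrable in time by Proposition 3.3. For the inertial term the two-dimensional Ladyzhenskaya inequality $|\mathbf{w}|_{4}^{2}\le c|\mathbf{w}|_{2}\|\mathbf{w}\|_{1}$ together with the antisymmetry of the trilinear form yields $\|(\mathbf{w}\cdot\nabla)\mathbf{w}\|_{-1}\le c|\mathbf{w}|_{4}^{2}\le c|\mathbf{w}|_{2}\|\mathbf{w}\|_{1}$ with $\mathbf{w}=\mathbf{u}+\mathbf{z}$, so its square integrates to $c\sup_{[0,T]}|\mathbf{u}+\mathbf{z}|_{2}^{2}\int_{0}^{T}\|\mathbf{u}+\mathbf{z}\|_{1}^{2}dt<\infty$. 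The elastic coupling satisfies $\|\nabla\cdot(\nabla\mathbf{d}\odot\nabla\mathbf{d})\|_{-1}\le c|\nabla\mathbf{d}|_{4}^{2}\le c\|\mathbf{d}\|_{1}\|\mathbf{d}\|_{2}$, whose square integrates to $c\sup_{[0,T]}\|\mathbf{d}\|_{1}^{2}\int_{0}^{T}\|\mathbf{d}\|_{2}^{2}dt<\infty$; all sup-factors are finite by Proposition 3.1 and the integrals by Proposition 3.3. Finally $\|\beta\mathbf{z}\|_{-1}\le c|\mathbf{z}|_{2}$ is bounded since $\mathbf{z}\in C([0,T];\mathbf{H}^{2})$ by $(2.15)$. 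This gives $\frac{d}{dt}\mathbf{u}\in L^{2}([0,T];\mathbf{V}')$ uniformly over $B$.

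For $\mathbf{d}$, I would first conjugate away the noise. Writing $H\mathbf{d}=h\times\mathbf{d}$ (a constant skew matrix, so $\mathbf{d}\times h=-H\mathbf{d}$) and $R(t)=e^{W_{2}(t)H}$, the Stratonovich product rule shows that $\tilde{\mathbf{d}}:=R(t)\mathbf{d}$ solves the noise-free random equation $\partial_{t}\tilde{\mathbf{d}}=\Delta\tilde{\mathbf{d}}-(\mathbf{u}+\mathbf{z})\cdot\nabla\tilde{\mathbf{d}}-f(\tilde{\mathbf{d}})+\tfrac{1}{2}(\tilde{\mathbf{d}}\times h)\times h$, where I use that the spatially constant orthogonal matrix $R$ commutes with $\Delta,\nabla,A_{2}$, preserves the Neumann domain and all $L^{p},H^{s}$ norms, and that $R^{-1}f(R\,\tilde{\mathbf{d}})=f(\tilde{\mathbf{d}})$ because $f(\mathbf{d})=\tilde{f}(|\mathbf{d}|^{2})\mathbf{d}$ depends on $\mathbf{d}$ only through $|\mathbf{d}|$ and $\mathbf{d}$ itself. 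Since $\tilde{\mathbf{d}}$ then inherits every a priori bound of $\mathbf{d}$, the isometry $\|A_{2}^{1/2}\psi\|_{-1}=|\psi|_{2}$ reduces the claim to showing the right-hand side lies in $L^{2}([0,T];\mathbb{H})$. I would estimate its terms by $|A_{2}\tilde{\mathbf{d}}|_{2}\le c\|\mathbf{d}\|_{2}$, square-integrable by Proposition 3.3; by $|(\mathbf{u}+\mathbf{z})\cdot\nabla\tilde{\mathbf{d}}|_{2}\le c|\mathbf{u}+\mathbf{z}|_{4}\,|\nabla\mathbf{d}|_{4}\le c(|\mathbf{u}+\mathbf{z}|_{2}\|\mathbf{u}+\mathbf{z}\|_{1}\|\mathbf{d}\|_{1}\|\mathbf{d}\|_{2})^{1/2}$, which after squaring and one Cauchy--Schwarz in time is controlled by the sup-bounds and $(\int_{0}^{T}\|\mathbf{u}+\mathbf{z}\|_{1}^{2})^{1/2}(\int_{0}^{T}\|\mathbf{d}\|_{2}^{2})^{1/2}$; by $c|\mathbf{d}|_{2}$ for the zeroth-order rotation term; and, crucially, by $|f(\tilde{\mathbf{d}})|_{2}=|f(\mathbf{d})|_{2}$, which is uniformly bounded by $(3.28)$ of Lemma 3.1.

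The main obstacle is precisely this last point, tied to the noise: since the multiplicative term is not removed by $\mathbf{v}=\mathbf{u}+\mathbf{z}$, the orientation field $\mathbf{d}$ is genuinely not differentiable in time (its increment contains the martingale $\int A_{2}^{1/2}(\mathbf{d}\times h)\,dW_{2}$, whose pathwise time derivative is not a function), so $\frac{d}{dt}A_{2}^{1/2}\mathbf{d}$ has to be understood through the conjugated variable $\tilde{\mathbf{d}}$. Because $t\mapsto R(t)$ is almost surely continuous and orthogonal, the compactness of $\tilde{\mathbf{d}}$ in $L^{2}([s,T];\mathbb{H}^{1})$ furnished by the Aubin--Lions lemma transfers verbatim to $\mathbf{d}=R^{-1}\tilde{\mathbf{d}}$, so the estimate for $\tilde{\mathbf{d}}$ is exactly what the subsequent argument needs. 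The only genuinely non-routine analytic ingredient is the uniform $L^{2}$-control of the high-degree polynomial nonlinearity $f(\mathbf{d})$ coming from the logarithmic energy estimate of Lemma 3.1; without it the term $f(\tilde{\mathbf{d}})$ would grow with the degree $N$ and the whole bound would break down.
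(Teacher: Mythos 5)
Your proposal is correct, and on the orientation half it takes a genuinely different route from the paper. The paper removes the multiplicative noise \emph{additively}: it introduces the auxiliary process $\mathbf{z}_{1}$ solving the Neumann heat equation $d\mathbf{z}_{1}-\Delta\mathbf{z}_{1}=(\mathbf{d}\times h)\,dW_{2}$ (equations $(3.45)$--$(3.47)$), shifts $\theta=\mathbf{d}-\mathbf{z}_{1}$, and tests the resulting pathwise equation $(3.48)$ against $A_{2}^{\frac{1}{2}}\eta$, so that the bound $(3.44)$ is in fact established for $\frac{d}{dt}A_{2}^{\frac{1}{2}}\theta$ rather than for $\mathbf{d}$ itself; your multiplicative conjugation $\tilde{\mathbf{d}}=e^{W_{2}(t)H}\mathbf{d}$, $Hx=h\times x$, achieves the same pathwise reduction. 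The velocity half of your argument (antisymmetry of the trilinear form, Ladyzhenskaya, $\|\nabla\cdot(\nabla\mathbf{d}\odot\nabla\mathbf{d})\|_{-1}\leq c\|\mathbf{d}\|_{1}\|\mathbf{d}\|_{2}$) coincides with the paper's $(3.51)$--$(3.54)$, and both routes ultimately lean on the same two inputs: the uniform bound $|f(\mathbf{d})|_{2}\leq c$ from $(3.28)$ of Lemma 3.1 (the paper uses it as $|f(\theta+\mathbf{z}_{1})|_{2}$, you as $|f(\tilde{\mathbf{d}})|_{2}=|f(\mathbf{d})|_{2}$) and the time-integrated bounds of Proposition 3.3. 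What your route buys: $R(t)=e^{W_{2}(t)H}$ depends only on $\omega$, not on the solution or the initial datum, and is an isometry on every $L^{p}$ and $H^{s}$ commuting with $\Delta$, $\nabla$, $A_{2}$ and the Neumann condition; hence the estimate is manifestly uniform over $B$, and the compactness in Proposition 3.5 transfers to $\mathbf{d}=R^{-1}\tilde{\mathbf{d}}$ verbatim. The paper's $\mathbf{z}_{1}$, by contrast, depends on $\mathbf{d}$ and hence on the initial datum, so uniformity over $B$ of the $\mathbf{z}_{1}$-terms must be routed back through Lemma 3.1, and the passage from $\theta$ to $\mathbf{d}$ in the compactness step is less transparent; on the other hand, the paper's additive device does not exploit the special skew-linear structure $\mathbf{d}\mapsto\mathbf{d}\times h$ and the rotational equivariance $R^{-1}f(R\tilde{\mathbf{d}})=f(\tilde{\mathbf{d}})$, both of which your conjugation needs.

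One slip worth fixing: the conjugated equation should carry \emph{no} zeroth-order term. Starting from the Stratonovich form $d\mathbf{d}=(\cdots)\,dt-H\mathbf{d}\circ dW_{2}$, the Stratonovich chain rule gives $d\tilde{\mathbf{d}}=R(\cdots)\,dt$ outright; equivalently, in It\^{o} form the correction $\tfrac{1}{2}H^{2}\mathbf{d}\,dt$ appearing in $(3.18)$ cancels against the $\tfrac{1}{2}RH^{2}\mathbf{d}\,dt$ from $dR$ and the cross-variation $-RH^{2}\mathbf{d}\,dt$. So the term $\tfrac{1}{2}(\tilde{\mathbf{d}}\times h)\times h$ you retained is spurious. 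Since you bounded it by $c|\mathbf{d}|_{2}$ anyway, the conclusion is unaffected --- the true equation has one fewer term, each of which you controlled --- but as written your transformed equation is not the one $\tilde{\mathbf{d}}$ satisfies.
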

\begin{proof}
Consider the following equation
\begin{eqnarray}
d \mathbf{z}_{1}-\Delta \mathbf{z}_{1}&=&(\mathbf{d}\times h   )dW_{2}\\
\frac{\partial \mathbf{z}_{1}(x,t)}{\partial \mathbf{n}}&=&0, \ \ \forall (x, t)\in \Gamma\times \mathbb{R}_{+},\\
\mathbf{z}_{1}(x)|_{t=0}&=&0,\ \ \ \forall x\in \mathbf{D}.
\end{eqnarray}
Obviously, the unique solution $\mathbf{z}_{1}(t)$ to $(3.39)-(3.41)$ satisfies $\mathbf{z}_{1}(t) \in C([0,T]; \mathbb{H}^{2})$.  Make the classical change $\mathbf{d}-\mathbf{z}_{1}=\theta$, then  $(3.18)$ is equivalent to the following equation
\begin{eqnarray}
\theta_{t}+[\mathbf{u}+\mathbf{z}_{1}\cdot \nabla (\theta+\mathbf{z}_{1})]-\Delta \theta+f(\theta +\mathbf{z}_{1})-\frac{1}{2}(\theta +\mathbf{z}_{1})\times h\times h=0,
\end{eqnarray}
where $\theta$ satisfies the following initial boundary conditions
\begin{eqnarray}
\frac{\partial \theta(x,t)}{\partial \mathbf{n}}&=&0, \ \ \forall (x, t)\in \Gamma\times \mathbb{R}_{+},\\
\theta(x)|_{t=0}&=&\mathbf{d}_{0}(x),\ \ \ \forall x\in \mathbf{D}.
\end{eqnarray}
For  $\eta\in\mathbb{ H}^{1},$ taking inner product of $(3.46)$ with $A_{2}^{\frac{1}{2}}\eta$ in $\mathbb{H}$ yields,
\begin{eqnarray}
&&\langle \frac{d A_{2}^{\frac{1}{2}}\theta}{dt},   \eta\rangle=\langle \frac{d\theta}{dt},    A_{2}^{\frac{1}{2}}\eta \rangle\nonumber\\
&&=\langle \Delta\theta ,   A_{2}^{\frac{1}{2}}\eta  \rangle -\langle (\mathbf{u}+\mathbf{z})\cdot \nabla(\theta+\mathbf{z}_{1}),    A_{2}^{\frac{1}{2}}\eta     \rangle-\langle f(\theta+\mathbf{z}_{1}),  A_{2}^{\frac{1}{2}}\eta  \rangle\nonumber\\
&&+\frac{1}{2}\langle  (\theta+\mathbf{z}_{1})\times h\times h,   A_{2}^{\frac{1}{2}}\eta \rangle\nonumber\\
&\leq& |\Delta\theta|_{2}\|\eta\|_{1}+|\mathbf{u}+\mathbf{z}|_{4}|\nabla\theta+\nabla \mathbf{z}_{1}|_{4}\|\eta\|_{1}+|f(\theta+\mathbf{z}_{1})|_{2}\|\eta\|_{1}\nonumber\\
&&+c|\theta+\mathbf{z}_{1}|_{2}\|\eta\|_{1}\nonumber\\
&\leq& |\Delta\theta|_{2}\|\eta\|_{1}+c|\mathbf{u}+\mathbf{z}|_{2}^{\frac{1}{2}}|\|\mathbf{u}+\mathbf{z}\|_{1}^{\frac{1}{2}}|\nabla\theta+\nabla \mathbf{z}_{1}|_{2}^{\frac{1}{2}}|\Delta\theta +\Delta \mathbf{z}_{1}|_{2}^{\frac{1}{2}}\|\eta\|_{1}\nonumber\\
&&+|f(\theta+\mathbf{z}_{1})|_{2}\|\eta\|_{1}+c|\theta+\mathbf{z}_{1}|_{2}^{2},
\end{eqnarray}
which together with the regularity of $\mathbf{z}, \mathbf{z}_{1}$ and Theorem 2.1 implies
\begin{eqnarray}
\frac{d A_{2}^{\frac{1}{2}}\theta}{dt}\ \mathrm{is}\ \mathrm{bounded}\ \mathrm{in}\  L^{2}([0,T];\mathbb{H}^{-1}).
\end{eqnarray}
For  $\xi\in  \mathbf{H}^{1},$ taking inner product of $(3.16)$ with $\xi$ in $\mathbf{H}$ yields,
\begin{eqnarray}
\langle \frac{d \mathbf{u}}{dt} ,  \xi \rangle&=&-\langle \mathbf{u}\cdot\nabla\mathbf{ u}, \xi  \rangle-\langle \mathbf{u}\cdot\nabla\mathbf{ z}, \xi  \rangle\nonumber\\
&&-\langle \mathbf{z}\cdot\nabla\mathbf{ u}, \xi  \rangle-\langle \mathbf{z}\cdot\nabla\mathbf{ z}, \xi  \rangle-\langle \Delta \mathbf{u}, \xi    \rangle+\int_{\mathbf{D}}\partial_{x_{i}}\mathbf{d}^{k}\partial_{x_{j}}\mathbf{d}^{k}\partial_{x_{i}}\xi^{j}.
\end{eqnarray}
By the incompressible property of the fluid (see $(1.2)$), the boundary condition $(1.4)$ and integration by parts we obtain
\begin{eqnarray}
-\langle \mathbf{u}\cdot\nabla\mathbf{ u}, \xi  \rangle=\langle \mathbf{u}\cdot\nabla \xi ,  \mathbf{ u}\rangle\leq |\nabla \xi|_{2}|\mathbf{u}|_{4}^{2}\leq c\|\xi\|_{1}|\mathbf{u}|_{2}\|\mathbf{u}\|_{1}
\end{eqnarray}
In view of $(3.51)$ and $(3.52)$ we have
\begin{eqnarray*}
\langle \frac{d \mathbf{u}}{dt} ,  \xi \rangle&\leq& c|\mathbf{u}|_{2}\|\mathbf{u}\|_{1}\|\xi\|_{1}+|\mathbf{u}|_{2}|\nabla \mathbf{z}|_{4}|\xi|_{4}+|\nabla \mathbf{u}|_{2}|\mathbf{z}|_{4}|\xi|_{4}\nonumber\\
&&+|\nabla \mathbf{z}|_{2}|\mathbf{z}|_{4}|\xi|_{4}+|\nabla \mathbf{u}|_{2}|\nabla\xi|_{2}+c|\nabla\mathbf{d}|_{4}^{2}\|\xi\|_{1}\nonumber\\
&\leq& c |\mathbf{u}|_{2}\| \mathbf{u}\|_{1}\|\xi\|_{1}+c|\mathbf{u}|_{2}\| \mathbf{z}\|_{2}\|\xi\|_{1}+c\| \mathbf{u}\|_{1}\|\mathbf{z}\|_{1}\|\xi\|_{1}\nonumber\\
&&+c\|\mathbf{z}\|_{1}^{2}\|\xi\|_{1}+c\| \mathbf{u}\|_{1}\|\xi\|_{1}+c\|\mathbf{d}\|_{1}\|\mathbf{d}\|_{2}\|\xi\|_{1},
\end{eqnarray*}
which implies
\begin{eqnarray}
|\frac{d \mathbf{u}}{dt} |_{\mathbf{H}^{-1}}^{2}\leq c |\mathbf{u}|_{2}^{2}\| \mathbf{u}\|_{1}^{2}+ \|\mathbf{u}\|_{1}^{2}\| \mathbf{z}\|_{2}^{2}+c\|\mathbf{z}\|_{1}^{2}+c\|\mathbf{u}\|_{1}^{2}+c\|\mathbf{d}\|_{1}^{2}\|\mathbf{d}\|_{2}^{2}.
\end{eqnarray}
where $|\cdot|_{\mathbf{H}^{-1}}$ is the norm of the Sobolev space $\mathbf{H}^{-1}$ which is the dual space of $\mathbf{V}.$
Then Theorem 2.1, $(2.15)$ and $(3.53)$ imply
\begin{eqnarray}
\frac{d \mathbf{u}}{dt}\ \mathrm{is} \ \mathrm{bounded}\ \mathrm{in}\  L^{2}([0,T];\mathbf{H}^{-1}).
\end{eqnarray}
Therefore, $(3.42)$ follows by $(3.50)$ and $(3.54).$ \hspace{\fill}$\square$
\end{proof}

\noindent ${3.5.\ \mathbf{The}\ \mathbf{solution}\ \mathbf{S(t,s;\omega)}\ \mathbf{operator}\   \mathbf{to}\   \mathbf{(1.1)-(1.5)}\ \mathbf{is} \ \mathbf{impact}\  \mathbf{in}\  \mathbf{H}\times \mathbb{H}^{1}. } $\\
By virtue of Proposition 3.3, Proposition 3.4 and Lemma 2.2 with $V=\mathbf{V}$ or $\mathbb{H}^{1}$, $H=\mathbf{H}$ or $\mathbb{H}$ and $V'= \mathbf{H}^{-1}$ or $\mathbb{H}^{-1}$ we infer that
\begin{corollary}
\begin{eqnarray*}
&& \mathbf{v}\in C([0,T];\mathbf{H}),\ \mathrm{and}\ \mathbf{d}\in C([0,T];\mathbb{H}^{1}),\ \mathrm{for}\ \mathrm{arbitrary}\ T>0,  \ a.s..
\end{eqnarray*}
\end{corollary}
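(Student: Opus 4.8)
The plan is to read this corollary as a direct consequence of the Lions--Magenes continuity Lemma 2.2, applied separately to the velocity field and to a fractional power of the orientation field, feeding in the two a priori bounds established in Propositions 3.3 and 3.4.

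First I would treat $\mathbf{v}$. Since $\mathbf{v}=\mathbf{u}+\mathbf{z}$ and $\mathbf{z}\in C([0,T];\mathbf{H}^{2})\subset C([0,T];\mathbf{H})$ by $(2.15)$, it is enough to prove $\mathbf{u}\in C([0,T];\mathbf{H})$. I would apply Lemma 2.2 to the Gelfand triple $V=\mathbf{V}$, $H=\mathbf{H}$, $V'=\mathbf{H}^{-1}$. The hypothesis $\mathbf{u}\in L^{2}([0,T];\mathbf{V})$ is exactly the first bound in Proposition 3.3, while $\frac{d}{dt}\mathbf{u}\in L^{2}([0,T];\mathbf{H}^{-1})$ is the first bound in Proposition 3.4. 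Lemma 2.2 then gives that $\mathbf{u}$ is almost everywhere equal to a function in $C([0,T];\mathbf{H})$, and adding back the pathwise-continuous process $\mathbf{z}$ yields $\mathbf{v}\in C([0,T];\mathbf{H})$.

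Next I would treat $\mathbf{d}$, where the subtlety is that we want continuity into $\mathbb{H}^{1}$ rather than merely into $\mathbb{H}$. For this reason I would not apply Lemma 2.2 to $\mathbf{d}$ directly, but to its image $w:=A_{2}^{1/2}\mathbf{d}$. The identity $\langle\cdot,\cdot\rangle_{H^{1}}=\langle A_{2}^{1/2}\cdot,A_{2}^{1/2}\cdot\rangle$, together with the spectral definition of the powers of $A_{2}$, shows that $A_{2}^{1/2}$ is an isometric isomorphism $\mathbb{H}^{k+1}\to\mathbb{H}^{k}$, so that $w\in C([0,T];\mathbb{H})$ is equivalent to $\mathbf{d}\in C([0,T];\mathbb{H}^{1})$ and $w\in L^{2}([0,T];\mathbb{H}^{1})$ is equivalent to $\mathbf{d}\in L^{2}([0,T];\mathbb{H}^{2})$. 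Applying Lemma 2.2 with $V=\mathbb{H}^{1}$, $H=\mathbb{H}$, $V'=\mathbb{H}^{-1}$, the requirement $w\in L^{2}([0,T];\mathbb{H}^{1})$ is the second bound of Proposition 3.3, and $\frac{d}{dt}w=\frac{d}{dt}A_{2}^{1/2}\mathbf{d}\in L^{2}([0,T];\mathbb{H}^{-1})$ is precisely the second bound of Proposition 3.4. The lemma then returns $w\in C([0,T];\mathbb{H})$, hence $\mathbf{d}\in C([0,T];\mathbb{H}^{1})$.

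Once Propositions 3.3 and 3.4 are granted, the argument is essentially bookkeeping; the only genuinely delicate point, and the one I would check most carefully, is the passage from $\mathbb{H}$- to $\mathbb{H}^{1}$-continuity through $A_{2}^{1/2}$. One must verify that $A_{2}^{1/2}$, being a fixed time-independent operator, commutes with the distributional time derivative on $[0,T]$, so that $\frac{d}{dt}A_{2}^{1/2}\mathbf{d}=A_{2}^{1/2}\frac{d}{dt}\mathbf{d}$ and the object controlled in Proposition 3.4 is genuinely the weak derivative of $w$; and that the powers of $A_{2}$ realise $\mathbb{H}^{1}\subset\mathbb{H}\subset\mathbb{H}^{-1}$ as a Gelfand triple matching the hypotheses of Lemma 2.2. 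Since all the bounds of Propositions 3.3 and 3.4 hold for $\tilde{\mathbb{P}}$-a.e. $\omega$, the resulting continuity statements hold almost surely, as asserted.
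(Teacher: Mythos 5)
Your argument is exactly the paper's: Corollary 3.1 is deduced there in one line by applying the Lions--Magenes Lemma 2.2 with $V=\mathbf{V}$ or $\mathbb{H}^{1}$, $H=\mathbf{H}$ or $\mathbb{H}$, $V'=\mathbf{H}^{-1}$ or $\mathbb{H}^{-1}$, fed with the bounds of Propositions 3.3 and 3.4, which is precisely your two applications of the lemma to $\mathbf{u}$ and to $A_{2}^{1/2}\mathbf{d}$. Your additional bookkeeping (the decomposition $\mathbf{v}=\mathbf{u}+\mathbf{z}$ with $\mathbf{z}\in C([0,T];\mathbf{H}^{2})$, and the identification of $\mathbb{H}^{1}$-continuity of $\mathbf{d}$ with $\mathbb{H}$-continuity of $A_{2}^{1/2}\mathbf{d}$) only makes explicit what the paper leaves implicit, so the proposal is correct and follows the same route.
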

Then we will use Aubin-Lions lemma and Corollary 3.1 to show the following compactness result for the solution operators to $(1.1)-(1.5)$.
\begin{proposition}
For $ \omega \in \Omega$ , $S(t,s;\omega)$ is compact from  $\mathbf{H}\times \mathbb{H}^{1}$ to  $\mathbf{H}\times \mathbb{H}^{1}$,  for all $s, t \in \mathbb{R}$ and $ s \leq t.$
\end{proposition}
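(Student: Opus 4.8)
The plan is to show that, for the full-measure set of $\omega$ on which Propositions 3.3, 3.4, Corollary 3.1 and the flow property of Proposition 3.2 all hold, the operator $S(t,s;\omega)$ sends every bounded set of $\mathbf{H}\times\mathbb{H}^{1}$ into a relatively compact one. Fix $s<t$ and a bounded set $B=\{(\mathbf{v}_{0},\mathbf{d}_{0}):|\mathbf{v}_{0}|_{2}+\|\mathbf{d}_{0}\|_{1}\le M\}$, take a sequence $(\mathbf{v}_{0}^{n},\mathbf{d}_{0}^{n})\in B$ with solutions $(\mathbf{v}^{n},\mathbf{d}^{n})$, and write $\mathbf{u}^{n}=\mathbf{v}^{n}-\mathbf{z}$. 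Since $\mathbf{z}$ does not depend on the initial data, it suffices to extract a subsequence for which $(\mathbf{u}^{n}(t),\mathbf{d}^{n}(t))$ converges in $\mathbf{H}\times\mathbb{H}^{1}$. The hard part will be that Propositions 3.3 and 3.4 only control the trajectories \emph{on the whole interval} $[s,t]$, so Aubin--Lions will deliver convergence in $L^{2}([s,t];\cdot)$, i.e.\ only at almost every intermediate time, not at the endpoint $t$. I would therefore first obtain this almost-everywhere convergence and then transport it to time $t$ using the flow decomposition of Proposition 3.2 together with the Lipschitz dependence on initial data of Theorem 2.1.

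First I would apply Lemma 2.1 twice. For the velocity, take $B_{0}=\mathbf{V}$, $B=\mathbf{H}$, $B_{1}=\mathbf{H}^{-1}$; the embedding $\mathbf{V}\overset{c}{\subset}\mathbf{H}$ is compact by Rellich and all three spaces are reflexive. By Proposition 3.3 the family $\{\mathbf{u}^{n}\}$ is bounded in $L^{2}([s,t];\mathbf{V})$, and by Proposition 3.4 $\{\frac{d}{dt}\mathbf{u}^{n}\}$ is bounded in $L^{2}([s,t];\mathbf{H}^{-1})$, so $\{\mathbf{u}^{n}\}$ is relatively compact in $L^{2}([s,t];\mathbf{H})$. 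For the orientation field I would set $w^{n}=A_{2}^{\frac{1}{2}}\mathbf{d}^{n}$ and apply the lemma with $B_{0}=\mathbb{H}^{1}$, $B=\mathbb{H}$, $B_{1}=\mathbb{H}^{-1}$. The bound on $\int_{s}^{t}\|\mathbf{d}^{n}\|_{2}^{2}\,dt$ from Proposition 3.3 bounds $\{w^{n}\}$ in $L^{2}([s,t];\mathbb{H}^{1})$, and Proposition 3.4 bounds $\{\frac{d}{dt}w^{n}\}$ in $L^{2}([s,t];\mathbb{H}^{-1})$, so $\{w^{n}\}$ is relatively compact in $L^{2}([s,t];\mathbb{H})$. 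Because the identities of Section 2 make $A_{2}^{\frac{1}{2}}$ an isometry of $\mathbb{H}^{1}$ onto $\mathbb{H}$, this is exactly relative compactness of $\{\mathbf{d}^{n}\}$ in $L^{2}([s,t];\mathbb{H}^{1})$. Passing to a common subsequence (still indexed by $n$) and extracting once more, there is a set $E\subset(s,t)$ of full measure such that for every $\tau\in E$
\begin{eqnarray*}
\mathbf{u}^{n}(\tau)\rightarrow \bar{\mathbf{u}}(\tau)\ \mathrm{in}\ \mathbf{H}\quad\mathrm{and}\quad \mathbf{d}^{n}(\tau)\rightarrow \bar{\mathbf{d}}(\tau)\ \mathrm{in}\ \mathbb{H}^{1}.
\end{eqnarray*}

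Finally I would fix any $t_{*}\in E$, so that $(\mathbf{v}^{n}(t_{*}),\mathbf{d}^{n}(t_{*}))=(\mathbf{u}^{n}(t_{*})+\mathbf{z}(t_{*}),\mathbf{d}^{n}(t_{*}))$ converges strongly in $\mathbf{H}\times\mathbb{H}^{1}$ to some $(\bar{\mathbf{v}}_{*},\bar{\mathbf{d}}_{*})$. By the flow property of Proposition 3.2,
\begin{eqnarray*}
S(t,s;\omega)(\mathbf{v}_{0}^{n},\mathbf{d}_{0}^{n})=S(t,t_{*};\omega)\big(S(t_{*},s;\omega)(\mathbf{v}_{0}^{n},\mathbf{d}_{0}^{n})\big)=S(t,t_{*};\omega)(\mathbf{v}^{n}(t_{*}),\mathbf{d}^{n}(t_{*})).
\end{eqnarray*}
Since the map $(\mathbf{v}_{s},\mathbf{d}_{s})\mapsto S(t,t_{*};\omega)(\mathbf{v}_{s},\mathbf{d}_{s})$ is Lipschitz continuous on $\mathbf{H}\times\mathbb{H}^{1}$ by Theorem 2.1, applying it to the strongly convergent data gives
\begin{eqnarray*}
S(t,s;\omega)(\mathbf{v}_{0}^{n},\mathbf{d}_{0}^{n})\rightarrow S(t,t_{*};\omega)(\bar{\mathbf{v}}_{*},\bar{\mathbf{d}}_{*})\quad\mathrm{in}\ \mathbf{H}\times\mathbb{H}^{1}.
\end{eqnarray*}
Thus every bounded sequence of initial data has a subsequence whose images under $S(t,s;\omega)$ converge in $\mathbf{H}\times\mathbb{H}^{1}$, which is the claimed compactness. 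The essential, non-routine point is precisely this last step: Aubin--Lions only yields convergence at almost every intermediate time, and it is the decomposition of the flow through $t_{*}$ combined with the continuous dependence on initial data that recovers compactness at the terminal time $t$; Corollary 3.1 is what guarantees that the trajectories are genuinely continuous in $\mathbf{H}\times\mathbb{H}^{1}$, so that evaluation at $t_{*}$ and at $t$ is legitimate.
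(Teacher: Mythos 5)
Your proposal is correct and follows essentially the same route as the paper's proof: Aubin--Lions applied to $(\mathbf{u},A_{2}^{\frac{1}{2}}\mathbf{d})$ via Propositions 3.3 and 3.4 to get compactness in $L^{2}$ of time, extraction of a subsequence converging at almost every intermediate time, and then the decomposition $S(t,s;\omega)=S(t,t_{*};\omega)S(t_{*},s;\omega)$ combined with continuous dependence on initial data (Theorem 2.1) to transport the convergence to the terminal time $t$. The only cosmetic differences are that you apply Lemma 2.1 to each component separately rather than to the product space and that you make explicit the isometry $A_{2}^{\frac{1}{2}}:\mathbb{H}^{1}\to\mathbb{H}$, which the paper uses implicitly.
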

\begin{proof}
In Proposition 3.1, we have obtained absorbing ball for $(S(t,s;\omega))_{t\geq s,\omega\in \Omega}$ at any time $t\in \mathbb{R}$. We denote by $B(s,r(\omega))$, the absorbing ball at time $s$ with center $0\in \mathbf{H}\times \mathbb{H}^{1}$ and radius $r(\omega)$.  Denote by $\mathcal{B}$  a bounded subset $\mathbf{H}\times \mathbb{H}^{1}$ and set $\mathcal{C}_{T}$ as a subset of the function space:
\begin{eqnarray*}
 \mathcal{C}_{T}:=\Big{\{} \Big{(}\mathbf{v},   A_{2}^{\frac{1}{2}}\mathbf{d}\Big{)}\Big{|}(\mathbf{v}(s),\mathbf{d}(s))\in \mathcal{B},
 (\mathbf{v}(t),\mathbf{d}(t))=S(t,s;\omega)(\mathbf{v}(s),\mathbf{d}(s)),t\in[s,T], s\leq T \Big{\}}.
\end{eqnarray*}
Since both  $\mathbf{H}^{1}\subset \mathbf{H}$ and $\mathbb{H}^{1}\subset \mathbb{H}$ are compact, $\mathbf{H}^{1}\times \mathbb{H}^{1}  \subset \mathbf{H}\times \mathbb{H}$ is also compact. For arbitrary $(\mathbf{v}(s), \mathbf{d}(s))\in \mathcal{B} ,$ by Proposition 3.3 and Proposition 3.4 we know
\begin{eqnarray*}
(\mathbf{u}, A_{2}^{\frac{1}{2}}\theta)\ \mathrm{is}\ \mathrm{bounded}\ \mathrm{in}\   L^{2}([s, T];\mathbf{H}^{1}\times \mathbb{H}^{1})
\end{eqnarray*}
and
\begin{eqnarray*}
(\partial_{t}\mathbf{u},  \partial_{t}A_{2}^{\frac{1}{2}}\theta )\ \mathrm{is}\ \mathrm{bounded}\ \mathrm{in}\    L^{2}([s, T];\mathbf{H}^{-1}\times \mathbb{H}^{-1}).
\end{eqnarray*}
Therefore, by Lemma 2.1 with
\[
B_{0}=\mathbf{H}^{1}\times \mathbb{H}^{1},\ \ B=\mathbf{H}\times \mathbb{H},\ \ B_{1}=\mathbf{H}^{-1}\times \mathbb{H}^{-1},
\]
$\mathcal{C}_{T}$ is compact in $L^{2}([s, T];\mathbf{H}\times \mathbb{H} ).$
In order to show that for any fixed $t\in (s, T],\omega\in \tilde{\Omega}, S(t,s;\omega) $ is a compact operator in $\mathbf{H}\times \mathbb{H}^{1} ,$ we take any bounded sequences
$\{(\mathbf{v}_{0,n}, \mathbf{d}_{0,n} ) \}_{n\in \mathbb{N}}\subset \mathcal{B}$ and we want to extract, for any fixed $t\in (s, T]$ and $\omega\in \Omega,$ a convergent subsequence from
$\{S(t,s;\omega)(\mathbf{v}_{0,n}, \mathbf{d}_{0,n}) \}$.
Since $\{(\mathbf{v}, A_{2}^{\frac{1}{2}}\mathbf{d}  ) \}\subset \mathcal{C}_{T} ,$ by Lemma 2.1, there is a function $(\mathbf{v}_{*},\mathbf{d}_{*})$:
\[
(\mathbf{v}_{*},\mathbf{d}_{*})\in  L^{2}([s, T];\mathbf{H}\times \mathbb{H}^{1}),
\]
and a subsequence of $\{S(t,s;\omega)(\mathbf{v}_{0,n}, \mathbf{d}_{0,n} ) \}_{n\in \mathbb{N}},$ still denoted by  $\{S(t,s;\omega)(\mathbf{v}_{0,n}, \mathbf{d}_{0,n} ) \}_{n\in \mathbb{N}}$  for simplicity, such that
\begin{eqnarray}
\lim\limits_{n\rightarrow \infty}\int_{s}^{T}\|S(t,s;\omega)(\mathbf{v}_{0,n}, \mathbf{d}_{0,n} )-(\mathbf{v}_{*}(t),\mathbf{d}_{*}(t))  \|_{\mathbf{H}\times \mathbb{H}^{1}}^{2}dt=0,
\end{eqnarray}
where $\|\cdot\|_{\mathbf{H}\times \mathbb{H}^{1}}$ denotes the norm of the product space $\mathbf{H}\times \mathbb{H}^{1}.$
By measure theory, convergence in mean square implies almost sure convergence of a subsequence. Therefore, it follows from $(3.55)$ that there exists a subsequence of $\{S(t,s;\omega)(\mathbf{v}_{0,n}, \mathbf{d}_{0,n} ) \}_{n\in \mathbb{N}}, $
still denoted by $\{S(t,s;\omega)(\mathbf{v}_{0,n}, \mathbf{d}_{0,n}) \}_{n\in \mathbb{N}}$ for simplicity , such that
\begin{eqnarray}
 \lim\limits_{n\rightarrow \infty}\|S(t,s;\omega)(\mathbf{v}_{0,n}, \mathbf{d}_{0,n} )-(\mathbf{v}_{*}(t),\mathbf{d}_{*}(t))  \|_{ \mathbf{H}\times \mathbb{H}^{1}}=0,\ \ a.e.\ t\in (s, T].
\end{eqnarray}
Fix any $t\in (s, T ],$ by $(3.56),$ we can select a $t_{1}\in (s, t)$ such that
\[
 \lim\limits_{n\rightarrow \infty}\|S(t_{1}, s, \omega)(\mathbf{v}_{0,n}, \mathbf{d}_{0,n} )-(\mathbf{v}_{*}(t_{1}),\mathbf{d}_{*}(t_{1}))  \|_{\mathbf{H}\times \mathbb{H}^{1}}=0.
\]
Then by the continuity of the map $S(t,t_{1};\omega) $ in $ \mathbf{H}\times \mathbb{H}^{1}$ with respect to initial value, we have
\begin{eqnarray*}
S(t,s;\omega)(\mathbf{v}_{0,n}, \mathbf{d}_{0,n})&=&S(t,t_{1};\omega)S(t_{1},s;\omega)(\mathbf{v}_{0,n}, \mathbf{d}_{0,n})\\
&&\rightarrow S(t,t_{1};\omega)(\mathbf{v}_{*}(t_{1}),\mathbf{d}_{*}(t_{1})),\ \ \ \mathrm{in}\ \mathbf{H}\times \mathbb{H}^{1}.
\end{eqnarray*}
Hence for any $t\in(s, T], \{S(t,s;\omega)(\mathbf{v}_{0,n}, \mathbf{d}_{0,n} ) \}_{n\in \mathbb{N}}$   contains a subsequence which is convergent $\mathrm{in}\ \mathbf{H}\times \mathbb{H}^{1},$ which implies that for any fixed $t\in (s, T],\omega\in \Omega, S(t,s;\omega) $ is a compact operator in $\mathbf{H}\times \mathbb{H}^{1}.$  \hspace{\fill}$\square$
\end{proof}
\noindent ${3.6.\ \mathbf{The}\ \mathbf{existence}\ \mathbf{of}\ \mathbf{compact}\   \mathbf{absorbing}\   \mathbf{ball}\ \mathbf{in}\  \mathbf{H}\times \mathbb{H}^{1}. } $
\begin{proposition}
There exists a compact absorbing ball at any time $t\in \mathbb{R}$ for the stochastic dynamical system $(1.1)-(1.5)$ in $ \mathbf{H}\times \mathbb{H}^{1} . $
\end{proposition}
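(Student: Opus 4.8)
The plan is to obtain the compact absorbing set by \emph{combining} the (non-compact) absorbing ball of Proposition 3.1 with the smoothing/compactness of the solution operator from Proposition 3.5, glued together by the cocycle (semigroup) identity (i) of the stochastic flow. Concretely, for each $t\in\mathbb{R}$ let $B(t,\omega)$ denote the absorbing ball at time $t$ furnished by Proposition 3.1, namely a bounded ball of $\mathbf{H}\times\mathbb{H}^{1}$ centred at the origin with radius $r(t,\omega)$. I would then \emph{define} the candidate compact absorbing set at time $t$ by
\[
\mathcal{B}(t,\omega):=\overline{S(t,t-1;\omega)\,B(t-1,\omega)},
\]
the closure of the image, under one time unit of evolution, of the absorbing ball sitting one unit earlier.

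First I would check \textbf{compactness}. Since $B(t-1,\omega)$ is a bounded subset of $\mathbf{H}\times\mathbb{H}^{1}$ and, by Proposition 3.5, the map $S(t,t-1;\omega):\mathbf{H}\times\mathbb{H}^{1}\to\mathbf{H}\times\mathbb{H}^{1}$ is compact, i.e. it sends bounded sets to relatively compact ones (this is exactly what is extracted in that proof, where a convergent subsequence is produced from any bounded sequence via the Aubin--Lions argument together with Corollary 3.1), the image $S(t,t-1;\omega)B(t-1,\omega)$ is relatively compact. Hence its closure $\mathcal{B}(t,\omega)$ is a compact subset of $\mathbf{H}\times\mathbb{H}^{1}$.

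Next I would verify the \textbf{absorbing} property. Fix a bounded set $B\subset\mathbf{H}\times\mathbb{H}^{1}$. Because $B(t-1,\omega)$ is absorbing at time $t-1$, there exists $t_{B}(\omega)\le t-1$ such that $S(t-1,s;\omega)B\subset B(t-1,\omega)$ for all $s\le t_{B}(\omega)$. Applying the semigroup identity (i) in the form $S(t,s;\omega)=S(t,t-1;\omega)S(t-1,s;\omega)$, I obtain for every such $s$
\[
S(t,s;\omega)B=S(t,t-1;\omega)\,S(t-1,s;\omega)B\subset S(t,t-1;\omega)\,B(t-1,\omega)\subset\mathcal{B}(t,\omega),
\]
which is precisely the statement that $\mathcal{B}(t,\omega)$ absorbs $B$ at time $t$. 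Specialising to $t=0$ yields the compact absorbing ball $\mathcal{B}(0,\omega)$ required by Theorem 3.1.

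I do not expect a genuine analytic obstacle at this stage: the two hard ingredients, namely the time-uniform bounds in $\mathbf{H}\times\mathbb{H}^{1}$ (Proposition 3.1, resting on the logarithmic energy estimate and Lemma 3.1) and the compactness of $S(t,s;\omega)$ (Proposition 3.5, resting on Aubin--Lions and Corollary 3.1), are already available. The two points that must be stated carefully are (a) reading ``$S(t,t-1;\omega)$ is compact'' as mapping the bounded ball $B(t-1,\omega)$ to a \emph{precompact} set, so that passing to the closure produces a compact set; and (b) the measurability of $\omega\mapsto\mathcal{B}(t,\omega)$ as a random compact set, which I would deduce from the measurability property (iii) of the flow together with the measurability of the radius $r(t,\omega)$ inherited from the stationary ergodic process $\mathbf{z}$. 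Since a compact absorbing set is in particular a compact attracting set at time $0$, the existence of the random attractor then follows immediately from Theorem 2.2, as asserted in Theorem 3.1.
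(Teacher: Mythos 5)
Your proposal is correct and is essentially the paper's own proof: the paper likewise defines $\mathcal{B}(t,\omega)=\overline{S(t,s;\omega)B(s,r(\omega))}$ for a fixed intermediate time $s<t$ (your choice $s=t-1$ is just one instance), deduces compactness from Proposition 3.5 and the absorbing property from the cocycle identity $S(t,s_{0};\omega)=S(t,s;\omega)S(s,s_{0};\omega)$ applied to the absorbing ball of Proposition 3.1, exactly as you do. Your closing remarks on precompactness of the image and on measurability of $\omega\mapsto\mathcal{B}(t,\omega)$ only make explicit points the paper leaves implicit.
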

\begin{proof}
Using the notations given in Proposition 3.5, for $s<t,$ let $\mathcal{B}(t,\omega)= \overline{S(t,s;\omega)B(s,r(\omega))}$ be the closed set of $ S(t,s;\omega)B(s,r(\omega))$ in $\mathbf{H}\times \mathbb{H}^{1},$ where $B(s, r(\omega))$ is the absorbing ball at time $s$ with center $0\in \mathbf{H}\times \mathbb{H}^{1}$ and radius $r(\omega).$ Then, by the above arguments, we know $\mathcal{B}(t,\omega)$ is a random compact set in $\mathbf{H}\times \mathbb{H}^{1}$ for each $\omega.$  More precisely,  $\mathcal{B}(t,\omega)$ is a compact absorbing set in $\mathbf{H}\times \mathbb{H}^{1}$ at time $t\in \mathbb{R}.$ Indeed, for $(\mathbf{v}_{0,n}, \mathbf{d}_{0,n})\in \mathcal{B}$ a bounded subset in $\mathbf{H}\times \mathbb{H}^{1} $ there exists $s(\mathcal{B})\in \mathbb{R}_{-}$ such that if $s_{0}\leq s(\mathcal{B})$ , we have
\begin{eqnarray*}
S(t,s_{0};\omega)(\mathbf{v}_{0,n},\mathbf{d}_{0,n})=S(t,s;\omega)S(s,s_{0};\omega)(\mathbf{v}_{0,n},\mathbf{d}_{0,n})\subset S(t,s;\omega) B(s,r(\omega))\subset \mathcal{B}(t,\omega) .
\end{eqnarray*}
\hspace{\fill}$\square$
\end{proof}
\begin{Rem}
By Proposition 4.5 in $\cite{CF}$,   the existence of a random attractor as
constructed in the proof of Theorem 3.1 implies the existence of an invariant
Markov measure $\mu_{\cdot}\in \mathscr{P}_{\Omega}(\mathbf{H}\times \mathbb{H}^{1} )$ for $\varphi$ (see Definition 4.1 in $\cite{CF}$). In view of $\cite{C} $ there exists an invariant measure $\mu$ for the Markovian semigroup $\mathbb{P}_{t}[f(S(t,0,x))]$ satisfing
\[
\mu(B)= \int_{\Omega}\mu_{\omega}(B)\mathbb{P}(d\omega)\ \ and\ \ \mu(\mathcal{A}(\omega))=1,\ \ \mathbb{P}-a.e.,
\]
where $B\in \mathscr{B}(\mathbf{H}\times \mathbb{H}^{1})$ which is a Borel $\sigma$-algebra on $\mathbf{H}\times \mathbb{H}^{1} $. If the invariant measure for $\mathbb{P}_{t}$ is unique, then the
invariant Markov measure $\mu_{\cdot}$ for $\varphi$   is unique and given by
\[
\mu_{\omega}=\lim\limits_{t\rightarrow \infty}\varphi (t,\vartheta_{-t}\omega  )\mu.
\]
\end{Rem}

$\mathbf{Acknowledgments}.$ We deeply appreciate the valuable discussions with Professor Zdzislaw Brze$\acute{z}$niak for many days.

\def\refname{ Bibliography}

\end{document}